\DeclareSymbolFont{AMSb}{U}{msb}{m}{n}
\def\P{\mathbb P}
\def\N{\mathbb N}
\def\dd{\mathrm {d}}
\def\EE{\mathcal {E}}
\def\Fcal{\mathcal {F}}
\def\Ecal{\mathcal {E}}
\def\Mcal{\mathcal {M}}
\def\1{\,{\makebox[0pt][c]{\normalfont    1}
\makebox[2.5pt][c]{\raisebox{3.5pt}{\tiny {$\|$}}}
\makebox[-2.5pt][c]{\raisebox{1.7pt}{\tiny {$\|$}}}
\makebox[2.5pt][c]{} }}
\def\scirc{\mathbin{\raise.15ex\hbox{\scriptsize$\circ$}}}
\newtheorem{thm}{Theorem}[section]
\newtheorem{prop}[thm]{Proposition}
\newtheorem{cor}[thm]{Corollary}
\newtheorem{lemma}[thm]{Lemma}
\theoremstyle{definition}
\newtheorem{notation}[thm]{Notation}
\newtheorem{assumption}[thm]{Assumptions}
\newtheorem{properties}[thm]{Properties}
\newtheorem{defn}[thm]{Definition}
\newtheorem{remark}[thm]{Remark}
\newtheorem{ex}[thm]{Example}
\title{Convergence of Markov chain transition probabilities}
\author{Michael Scheutzow
  \thanks{Institut f\"ur Mathematik, MA 7-5, Fakult\"at II, 
    Technische Universit\"at Berlin, 
    Stra\ss e des 17.~Juni 136, 10623 Berlin, Germany;  \ 
    \small\texttt{ms{\scriptsize @}math.tu-berlin.de}}
\and Juni Schindler%
\thanks{Department of Mathematics, Imperial College London, South Kensington Campus, London SW7 2AZ, United Kingdom;  \ 
    \small\texttt{juni.schindler19{\scriptsize @}imperial.ac.uk}}}
\date{}
\begin{document}\maketitle

\begin{abstract}\noindent
  Consider a discrete time Markov chain with rather general state space which has an invariant probability measure $\mu$. There are several sufficient conditions in the literature
  which guarantee convergence of all or $\mu$-almost all transition probabilities to $\mu$ in the total variation (TV) metric: irreducibility plus aperiodicity, equivalence properties of
  transition probabilities, or coupling properties. In this work, we review and improve some of these criteria in such a way that they become necessary and sufficient for
  TV convergence of all respectively $\mu$-almost all transition probabilities. In addition, we discuss so-called generalized couplings.

\par\medskip

  \noindent\footnotesize
  \emph{2020 Mathematics Subject Classification} 
  Primary\, 60J05   % Random operators and equations
  \ Secondary\, 60G10 
%37B25 \ % Lyapunov functions and stability; attractors, repellers 
% 37B55 \ % Nonautonomous dynamical systems
% 37C70 \ % Attractors and repellers, topological structure 
% 37G35 \ % Attractors and their bifurcations 
%37H99 \ % Random dynamical systems: 
%        % `none of the above, but in this section' 
%37L55 \ % Infinite-dimensional random dynamical systems; 
        % stochastic equations
%          60D05 \ % Geometric probability, stochastic geometry, random sets
% 60G57 \ % Random measures
% 60H10 \ % Stochastic ordinary differential equations
% 60H15 \ % Stochastic partial differential equations 
% 60H25   % Random operators and equations
% 93E03
\end{abstract}

\noindent{\slshape\bfseries Keywords.} Markov chain; 
total variation; convergence of transition probabilities; invariant measure; coupling; generalized coupling; irreducibility; Harris chain

\section{Introduction}
It is a  classical result that all transition probabilities of a discrete time Markov chain with invariant probability measure (ipm) $\mu$ on a rather general state space $E$
converge to $\mu$ in the total variation metric provided that the chain is recurrent and aperiodic (\cite{MT09}). Further, {\em Doob's theorem} states that under appropriate
additional conditions, ultimate equivalence of every pair of transition probabilities implies the same result (see \cite[Theorem 4.2.1]{DZ96} or \cite{KS15}). Finally
the existence of {\em couplings} of chains starting at different initial conditions entails total variation convergence to $\mu$.  The goal of this paper is to
modify the sufficient conditions in the literature in such a way that they become equivalent. It will turn out, for example, that 
{\em asymptotic equivalence} of transition probabilities (which seems to be a new concept)  is equivalent to total variation convergence of all transition probabilities.
It is also of interest to find weaker conditions which only imply total
variation convergence of the transition probabilities starting from $\mu$ almost every $x \in E$. Again we will provide necessary and sufficient conditions similar to those
described above. We will also address a convergence property strictly between these two and again we will provide necessary and sufficient conditions. Apart from couplings
we will also formulate equivalent conditions in terms of {\em generalized couplings} for each of the convergence properties.

Throughout this paper $(E,\Ecal)$ denotes a measurable space for which $\Ecal$ is countably generated and the diagonal $\Delta:=\{(x,x): \,x \in E\}$ is in $\Ecal \otimes \Ecal$
(or, equivalently, $\Ecal$ is countably generated and separates points or, equivalently, $\Ecal$ is countably generated and all singletons are in $\Ecal$ (see \cite[p. 116]{E11}).
Let $P$ be a Markov kernel on $E$ and denote the corresponding $n$-step transition probability by $P_n(.,.)$, $n \in \N_0$.
$\P_x$  denotes the law of the Markov chain starting at $x \in E$. Note that $\P_x$ is a probability measure on $(E^{\N_0},\EE^{\otimes \N_0})$. We will often identify a Markov chain and
its Markov kernel $P$ and denote the corresponding Markov chain by $X$. We denote the {\em total variation} metric on the space of probability measures on $(E,\Ecal)$ by $d$,
i.e.~$d(\nu_1,\nu_2):=\sup_{A \in \Ecal}|\nu_1(A)-\nu_2(A)|$.  We say that $P_n(x,.)$ {\em converges} to a probability measure $\mu$ on $(E,\EE)$ if $P_n(x,.)$ converges to $\mu$
in the total variation metric as $n \to \infty$. Throughout  the paper we will assume that $P$ admits an ipm  $\mu$ (but we will not assume
uniqueness of $\mu$). From now on, the letter $\mu$ will always denote an invariant probability measure of the Markov chain $X$ associated to $P$.  

Let $\nu_1$ and $\nu_2$ be measures on the same measurable space  $(\bar E,\bar \EE)$. Then
we say (as usual) that $\nu_1$ is absolutely continuous with respect to $\nu_2$ (notation $\nu_1 \ll \nu_2$) if $A\in\bar\EE$ with $\nu_2(A)=0$ implies $\nu_1(A)=0$, and that $\nu_1$ and $\nu_2$ are   equivalent (denoted $\nu_1 \sim \nu_2$)
if they are mutually absolutely continuous. Further we write $\nu_1\not\perp\nu_2 $ if $\nu_1$ and $\nu_2$ are non-singular, i.e. there does not exist a set $A \in \bar \Ecal$
such that $\nu_1(A)=0$ and $\nu_2(A^c)=0$.   Any measure $\xi$ on $(\bar E\times \bar E,\bar \EE \otimes \bar \EE)$
with marginals $\nu_1$ and $\nu_2$ is called a {\em coupling} of $\nu_1$ and $\nu_2$. We write $\xi \in C(\nu_1,\nu_2)$. Recall the {\em coupling equality}: for
probability measures $\nu_1$ and $\nu_2$ on $(\bar E, \bar\EE)$, we have $d(\nu_1,\nu_2)=\inf\{\xi(\Delta):\,\xi \in C(\nu_1,\nu_2)\}$ (\cite[Theorem 2.2.2]{K18}). 
We will call a pair $(X,Y)$ of $\bar E$-valued random variables defined on the same probability space a coupling of the probability measures
$\nu_1$ and $\nu_2$ on $(\bar E,\bar \EE)$, if their joint law is a coupling of $\nu_1$ and $\nu_2$. Below we will deal with the cases $\bar E:=E$ and $\bar E:=E^{\N_0}$.  We will define the concept of a {\em generalized coupling} later. Generalized (asymptotic) couplings are particularly useful
to prove {\em weak} convergence of transition probabilities (see \cite{KS18} and \cite{BKS19}) but (non-asymptotic) generalized couplings can also be used
to establish upper bounds on the total variation distance of transition probabilities (see \cite[Proof of Theorem 1.1]{ERS09}).  

We will formulate all results in the discrete-time set-up. This is essentially without loss of generality. Indeed, assume that $\mu$ is an invariant probability measure of an $E$-valued continuous-time Markov process. Then $\mu$ is also an ipm of the associated {\em skeleton chain} sampled at times
$0,h,2h,...$ and for each $x \in E$ total variation convergence of $P_{nh}(x,.)$ to $\mu$  (as $n \to \infty$) for some $h>0$ is equivalent to  total variation convergence of $P_t(x,.)$ to $\mu$ since $t \mapsto d\big(P_t(x,.),\mu\big)$ is non-increasing.

Once one has established convergence of all or almost all transition probabilities then it is natural to ask for the speed of convergence. A large
number of papers have been devoted to these questions, for example \cite{HM11}, \cite{RR04} and \cite{K18}. We will however, not touch these questions here. 

At some point we will need a stronger condition on the  measurable space $(E,\Ecal)$: as usual, we say that $(E,\Ecal)$ is a {\em Borel} space if it is isomorphic (as a measurable space)
to a Borel subset of $[0,1]$. In particular, this holds for a complete, separable  metric space $E$ equipped with its Borel $\sigma$-field  $\Ecal$. 

\section{Necessary and sufficient conditions for total variation convergence}

Let $\big(X_n\big)_{n \in \N_0}$ be a Markov chain with transition kernel $P$, ipm $\mu$ and state space $(E,\Ecal)$ as in the introduction. We adopt the following notation (cf.~\cite{MT09}).

\begin{notation} For $x \in E$, $A \in \Ecal$,
  \begin{align*}
    Q(x,A):=&\P_x\big(\big\{X_n \in A\mbox{ for infinitely many }n\in \N\big\}\big),\\   
    L(x,A):=&\P_x\Big(\bigcup_{n=1}^\infty \big\{X_n \in A\big\}\Big).
  \end{align*}
\end{notation}

 We start by defining three properties of increasing generality which we will be interested in.

\begin{properties} We say that
  \begin{itemize}
    \item Property P$_1$ holds if $P_n(x,.)$ converges to $\mu$ for every $x \in E$.
    \item Property P$_2$ holds if $P_n(x,.)$ converges to $\mu$ for $\mu$-almost all $x \in E$ and\\ $\lim_{n \to \infty}d(P_n(x,.),\mu)<1$ for all $x \in E$.
    \item Property P$_3$ holds if $P_n(x,.)$ converges to $\mu$ for $\mu$-almost all $x \in E$.
  \end{itemize}
\end{properties}    

\begin{remark}\label{12}
  Note that Properties P$_1$ and P$_2$ both  imply uniqueness of $\mu$ (we will show the latter claim in Remark \ref{uniq}). Note also that $\lim_{n \to \infty}d(P_n(x,.),\mu)$ always exists since $\mu$ is invariant and the
  total variation distance can never increase when applying a measurable map. Therefore, we could replace ``$\lim_{n \to \infty}d(P_n(x,.),\mu)<1$ for all $x \in E$'' in P$_2$ by
  ``for each $x$ there exists some $n \in \N_0$ such that $d(P_n(x,.),\mu)<1$'' without changing the class of chains for which P$_2$ holds.
  One might also be interested in a modification $\tilde {\mathrm{P}}_2$ of Property P$_2$ in which the last property
  $\lim_{n \to \infty}d(P_n(x,.),\mu)<1$ for all $x \in E$ is replaced by uniqueness of $\mu$. Clearly, P$_2$ is stronger than $\tilde {\mathrm{P}}_2$  and it is easy to see that it is strictly stronger.
  Property $\tilde {\mathrm{P}}_2$ was studied in \cite{KS15}, for example, but P$_2$ is more closely related to conditions studied in the literature. We will see, in particular,
  that the assumptions of \cite[Corollary  1]{KS15} do not only imply $\tilde {\mathrm{P}}_2$ but even   P$_2$.
  Example \ref{peri} shows that one cannot delete the first part of property P$_2$ without changing the class of chains for which it holds.
\end{remark}
 We will  define four sets of assumptions, one in terms of equivalence or non-singularity of transition probabilities, one in terms of
  aperiodicity and recurrence or irreducibility properties, one in terms of couplings and one in terms of generalized couplings. It will turn out that
  all assumption with index $i$, $i \in \{1,2,3\}$, not only imply property P$_i$ but are also {\em necessary} for P$_i$ to hold. In some cases we
  formulate conditions with an additional prime (or some other symbol) which will formally be stronger than the same condition without prime but which will in fact turn out to be
  equivalent (at least when the state space is Borel). 
  Before we state various assumptions we define the (possibly new) concept of asymptotic equivalence of transition probabilities.

\begin{defn}
    We say that the states $x\in E$ and $y\in E$ are {\em asymptotically equivalent} if for each $\varepsilon>0$ there exists some $n \in \N$ and a set $A \in \Ecal$ such that
    $P_n(x,A)\ge 1-\varepsilon$, $P_n(y,A)\ge 1-\varepsilon$, and the measures $P_n(x,.)$ and $P_n(y,.)$ restricted to the set $A$ are equivalent.
  \end{defn}

\begin{remark}\label{equirel}
  Note that if for given $x,y \in E$, $\varepsilon>0$ and $n \in \N$ there exists a set $A$ as in the previous definition, then there exists a set $\bar A$ as in the
  previous definition (with the same $\varepsilon$) if $n$ is replaced by $n+1$ (and, by iteration, the same holds for all integers larger than $n$).
  This implies, in particular, that asymptotic equivalence induces an equivalence relation on $E$. %(we will use this fact later).
\end{remark}
  
 \begin{assumption} We say that
  \begin{itemize}
    \item Assumption A$_1$ holds if all pairs $(x,y)\in E \times E$ are asymptotically equivalent.
    \item Assumption A$_2$ holds if for all $(x,y)\in E \times E$ there exists some $n=n_{x,y}\in\N$ such that $P_n(x,.)\not\perp P_n(y,.)$.
    \item Assumption A$_3$ holds if for $\mu \otimes \mu$-almost all $(x,y)\in E \times E$ there exists some $n=n_{x,y}\in\N$ such that $P_n(x,.)\not\perp P_n(y,.)$.
    \item Assumption A$_3'$ holds if $\mu \otimes \mu$-almost all $(x,y)\in E \times E$ are asymptotically equivalent.
  \end{itemize}
\end{assumption}  

Lemma \ref{mbae} states that the set of all $(x,y)\in E \times E$ which are asymptotically equivalent is a measurable subset of $(E \times E,\Ecal \otimes \Ecal)$.

\begin{remark}\label{obvious}
  Obviously, Property P$_1$ implies that any two states $x,y$ are asymptotically equivalent (i.e.~A$_1$ holds)  while the simple Example \ref{simple} shows that
  it does {\em not} imply the stronger property ``for all $x,y \in E$ there exists some $n=n_{x,y} \in \N_0$ such that $P_n(x,.)\sim P_n(y,.)$'' under which P$_1$
  was shown in \cite[Theorem 1]{KS15}. 
  \end{remark}

Before we state the second set of assumptions, we define the concepts of aperiodicity, irreducibility and the Harris property for a Markov kernel $P$ with invariant measure $\mu$. 

\begin{defn}\label{aper} \cite[p. 32]{RR04}
  The  Markov kernel $P$ (with invariant probability measure  $\mu$) is called \emph{$d$-periodic},
  if $d\ge2$, and  there are disjoint sets $E_{1}, E_{2},...,E_{d}\in\mathcal{E}$ with  $\mu(E_{1})>0$ that fulfill
\begin{equation}\label{aperi}
	P(x,E_{i+1 (\mathrm{mod}\, d)})=1 \quad \forall x\in E_{i}, 1\le i\le d.
\end{equation}
The chain is called \emph{aperiodic} if no such $d\ge2$ exists.
\end{defn}

\begin{defn}\label{irreducible}
  The Markov kernel $P$  is called {\em $\phi$-irreducible} if $\phi$ is a non-trivial $\sigma$-finite measure on $(E,\EE)$ such that for all $A\in\EE$ with $\phi(A)>0$
  and all $x\in E$ we have $L(x,A)>0$ (or, equivalently, there exists some $n=n(x,A)\in\mathbb{N}$ such that $P_{n}(x,A)>0$).
  $P$ is called {\em irreducible} if $P$ is $\phi$-irreducible for some non-trivial $\phi$.
  We say that $P$ is {\em weakly irreducible}
  (with respect to the  given ipm $\mu$) if  there exists some  non-trivial  $\sigma$-finite measure $\phi$ on $(E,\EE)$ and a set $E_0\in \EE$ satisfying
  $\mu(E_0)=1$ such that for every $x \in E_0$ and every $A\in\EE$ with $\phi(A)>0$ we have $L(x,A)>0$.    
\end{defn}

\begin{remark}\label{rem:irr}
  It is straightforward to check that if $\phi$ is as in the definition (either part), then $\phi \ll \mu$. Further, if $P$ is (weakly) $\mu$-irreducible then $P$ is (weakly)
  $\phi$-irreducible for every non-trivial $\sigma$-finite measure on $(E,\EE)$ satisfying $\phi \ll \mu$. We will show in Proposition \ref{irredu} the less obvious fact
  that ($\phi$-)irreducibility implies $\mu$-irreducibility (which, in the terminology  of \cite[Proposition 4.2.2]{MT09}, means that $\mu$ is the {\em maximal irreducibility measure}).
  We will use Proposition \ref{irredu} only in the proof of Theorem \ref{second}.
\end{remark}

\begin{defn} \cite[p. 199]{MT09}
  $P$ or the associated Markov chain $X$ are called \emph{Harris} (or \emph{Harris recurrent}), if there exists a non-trivial %if $P$ is irreducible with respect to some non-zero
  $\sigma$-finite measure $\phi$ on $(E,\EE)$ such that for all $A\in\EE$ with $\phi(A)>0$ and all
  $x\in E$ we have $Q(x,A)=1$ (or, equivalently, $L(x,A)=1$ for all $x\in E$ and $A \in \EE$
  with $\phi(A)>0$).
\end{defn}

\begin{assumption} We say that
  \begin{itemize} 
    \item Assumption B$_1$ holds if $P$ is aperiodic and Harris.
    \item Assumption B$_2$ holds if $P$ is aperiodic and irreducible.
    \item Assumption B$_3$ holds if $P$ is aperiodic and weakly irreducible.
  \end{itemize}
\end{assumption}    

Note that Harris recurrence implies irreducibility, so  B$_1$ implies  B$_2$. 

Let $\Mcal(\bar E)$ be the set of all probability measures on the measurable space $(\bar E,\bar \EE)$.
For $\xi \in \Mcal (\bar E \times \bar E)$, we denote the
$i$-th marginal by $\xi^i$, $i \in \{1,2\}$. If $(\bar E,\bar \EE)=(E^{\N_0},\bar \EE^{\N_0})$, then  we denote the projection of $\xi$ resp.~$\xi^i$ onto the $k$-th coordinate by $\xi_k$ resp.~$\xi^i_k$, $k \in \N_0$, $i \in \{1,2\}$.
\begin{assumption} We say that
  \begin{itemize}
  \item Assumption C$_1$ holds if for each $x,y \in E$ and $m \in \N$ there exists some $k_m\in \N_0$ and a coupling $\zeta [m]\in C(P_{k_m}(x,.),P_{k_m}(y,.))$ such that
    $\zeta[m](\Delta)\ge 1-\frac 1m$.

  \item Assumption $\mathrm{\hat C_1}$ holds if for each $x,y \in E$ and $m \in \N$ there exists  a coupling $\zeta [m]\in C(P_{m}(x,.),P_{m}(y,.))$ such that
    $\lim_{m \to \infty}\zeta [m](\Delta)=1$.

  \item Assumption $\mathrm{\mathring C_1}$ holds if for each $x,y \in E$ there exists  a coupling $\xi\in C(\P_x,\P_y)$ such that
    $\lim_{m \to \infty}\xi_m(\Delta)=1$.
  
  \item Assumption C$_1'$  holds if  for each $x,y \in E$ there exists a coupling $(X_k)_{k \in \N_0}$,  $(Y_k)_{k \in \N_0}$
    of $\P_x$ and $\P_y$ on some space
    $(\Omega,\Fcal,\P)$ such that $\lim_{n \to \infty}\P\big( X_k=Y_k \mbox{ for all } k \ge n\big)=1$.

  \item Assumption C$_2$ holds if for all $x,y \in E$ there exists some $k \in \N_0$ and a coupling $\zeta \in C(P_k(x,.),P_k(y,.))$ such that
    $\zeta(\Delta)>0$.
   
  \item Assumption C$_2'$ holds if for each $x,y \in E$ there exists a coupling $(X_k)_{k \in \N_0}$,  $(Y_k)_{k \in \N_0}$ of $\P_x$ and $\P_y$ on some
     space $(\Omega,\Fcal,\P)$ such that $\liminf_{n \to \infty}\P\big( X_k=Y_k \mbox{ for all } k \ge n\big)>0$ and for $\mu\otimes\mu$-almost every $(x,y)\in E\times E$ there exists
     a coupling $(X_k)_{k \in \N_0}$,  $(Y_k)_{k \in \N_0}$ of $\P_x$ and $\P_y$ on some
     space $(\Omega,\Fcal,\P)$ such that $\lim_{n \to \infty}\P\big( X_k=Y_k \mbox{ for all } k \ge n\big)=1$.

   \item Assumption C$_3$ holds if for $\mu\otimes\mu$-almost every $(x,y)\in E\times E$ there exists some $k \in \N_0$ and a coupling $\zeta\in C(P_k(x,.),P_k(y,.))$ such that
    $\zeta(\Delta)>0$.

  \item Assumption C$_3'$ holds if for $\mu\otimes\mu$-almost every $(x,y)\in E\times E$ there exists a coupling $(X_k)_{k \in \N_0}$,  $(Y_k)_{k \in \N_0}$ of $\P_x$ and $\P_y$ on some
    space $(\Omega,\Fcal,\P)$ such that $\lim_{n \to \infty}\P\big( X_k=Y_k \mbox{ for all } k \ge n\big)=1$.    
  \end{itemize}
\end{assumption}

We chose Condition C$_i$ such that it is as weak as possible and  C$_i'$ such that it is as strong as possible subject to the requirement that both are equivalent to
all other conditions with the same index $i$ (in case the state space is Borel). 
Note that there are several natural conditions in between C$_i$ and C$_i'$ ($i=1,2,3$) for which there is no need to state them, since they will all turn out to be equivalent
(at least in the Borel case). 
Finally, we define the concept of a {\em generalized coupling}.

\begin{defn} For probability measures $\nu_1$ and $\nu_2$ on $(\bar E,\bar \EE)$, define
  \begin{itemize}
    \item $\tilde C(\nu_1,\nu_2):=\big\{\xi \in \Mcal(\bar E \times \bar E):\,\xi^1 \ll \nu_1,\,\xi^2 \ll \nu_2\big\}$,
    \item $\check C(\nu_1,\nu_2):=\big\{\xi \in \Mcal(\bar E \times \bar E):\,\xi^1 \ll \nu_1,\,\xi^2 \sim \nu_2\big\}$.
  \end{itemize}
\end{defn}

\begin{assumption} We say that
  \begin{itemize} 
    \item Assumption G$_1$ holds if  for each pair $(x,y) \in E\times E$ there exists some $\xi \in \check C(\P_x,\P_y)$ such that
    $\lim_{k \to \infty}\xi_k(\Delta)=1$,
  \item Assumption G$_2$ holds if for each pair $(x,y) \in E\times E$ there exists some $k \in \N$ and
    $\zeta\in \tilde C(P_k(x,.),P_k(y,.))$ such that $\zeta(\Delta)>0$.
    %$\xi \in \tilde C(\P_x,\P_y)$  and some $k \in \N$ such that
    %$\xi_k(\Delta)>0$,
  \item Assumption G$_3$ holds if for $\mu \otimes \mu$-almost every $(x,y)\in E\times E$ there exists some $k \in \N$ and
    $\zeta\in \tilde C(P_k(x,.),P_k(y,.))$ such that $\zeta(\Delta)>0$. 
  \end{itemize}
\end{assumption}

If we change ``$>0$'' in G$_2$ to ``$=1$'', then the resulting condition is {\em not} equivalent to G$_1$ (see Example \ref{standard}).

\begin{thm}\label{first}
  $\mathrm{A}_1$, $\mathrm{B}_1$,  $\mathrm{C}_1$, $\mathrm{\hat C}_1$, and  $\mathrm{P}_1$ are equivalent and
  $\mathrm{C}_1'\Rightarrow \mathrm{\mathring C_1}\Rightarrow   \mathrm{G}_1\Rightarrow \mathrm{A}_1$.  
  If $(E,\Ecal)$ is  Borel, then all these conditions are equivalent. 
\end{thm}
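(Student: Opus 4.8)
The plan is to prove the core block $\mathrm P_1\Leftrightarrow\mathrm C_1\Leftrightarrow\mathrm{\hat C}_1\Leftrightarrow\mathrm A_1\Leftrightarrow\mathrm B_1$ by a cycle, and then handle the generalized-coupling chain $\mathrm C_1'\Rightarrow\mathrm{\mathring C_1}\Rightarrow\mathrm G_1\Rightarrow\mathrm A_1$ and the Borel closure separately. The soft equivalences come first. Since TV distance contracts under a kernel, $d(P_n(x,\cdot),P_n(y,\cdot))$ is non-increasing in $n$, and invariance of $\mu$ gives, by convexity of $d$, the bound $d(P_n(x,\cdot),\mu)\le\int d(P_n(x,\cdot),P_n(y,\cdot))\,\mu(dy)$; so by dominated convergence $\mathrm P_1$ follows as soon as $d(P_n(x,\cdot),P_n(y,\cdot))\to0$ for ($\mu$-a.e.) $y$. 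The coupling equality converts $\mathrm C_1$ and $\mathrm{\hat C}_1$ into exactly such statements: $\zeta[m](\Delta)\ge1-\tfrac1m$ yields $d(P_{k_m}(x,\cdot),P_{k_m}(y,\cdot))\le\tfrac1m$, and monotonicity promotes a subsequential limit $0$ to $d_n\to0$; conversely $\mathrm P_1$ and the triangle inequality give $d(P_m(x,\cdot),P_m(y,\cdot))\to0$, whence couplings $\zeta[m]\in C(P_m(x,\cdot),P_m(y,\cdot))$ with $\zeta[m](\Delta)\to1$. For $\mathrm P_1\Rightarrow\mathrm A_1$ I record the reformulation that will be reused throughout: writing $p_n,q_n$ for the densities of $P_n(x,\cdot),P_n(y,\cdot)$ against their sum, the set $A=\{p_n>0,q_n>0\}$ makes the two restrictions equivalent and $P_n(x,A)=1-P_n(x,\{q_n=0\})\ge1-d_n$, so $x,y$ are asymptotically equivalent precisely when the mutually singular masses $P_n(x,\{q_n=0\})$ and $P_n(y,\{p_n=0\})$ have infimum $0$ in $n$.

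It then remains to close the cycle through $\mathrm B_1$, citing the classical fact $\mathrm B_1\Rightarrow\mathrm P_1$ (an aperiodic Harris chain with invariant probability measure has all transition probabilities converging in TV, \cite{MT09}) and proving $\mathrm A_1\Rightarrow\mathrm B_1$. Aperiodicity is the easy half: if $P$ were $d$-periodic with classes $E_1,\dots,E_d$, then for $x\in E_1$, $y\in E_2$ the measures $P_n(x,\cdot)$ and $P_n(y,\cdot)$ are carried by the disjoint classes $E_{1+n}$, $E_{2+n}$ and hence mutually singular for every $n$, which contradicts asymptotic equivalence. The Harris half is the main obstacle. One first checks that $\mathrm A_1$ implies $\mu$-irreducibility (asymptotic equivalence of all pairs forces every positive-$\mu$ set to be reachable from every state); by the structure theory of \cite{MT09} there is then a maximal absorbing Harris set $H$ with $\mu(H)=1$. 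To exclude ``bad'' states, fix any $x$ and some $y\in H$: absorption makes $P_n(y,\cdot)$ carried by $H$, so for a set $A$ with $P_n(x,A),P_n(y,A)\ge1-\varepsilon$ and equivalent restrictions, equivalence on $A$ forces $P_n(x,A\setminus H)=0$ and hence $P_n(x,H)\ge1-\varepsilon$; since $H$ is absorbing, $P_n(x,H)$ increases to $\P_x(\text{the chain ever enters }H)$, which is therefore $1$. Thus every state reaches $H$ almost surely and the chain is Harris.

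For the generalized-coupling chain, $\mathrm C_1'\Rightarrow\mathrm{\mathring C_1}\Rightarrow\mathrm G_1$ is soft: if $(X_k),(Y_k)$ couple $\P_x,\P_y$ with $\P(X_k=Y_k\ \forall k\ge n)\to1$, then $\{X_m=Y_m\}\supseteq\{X_k=Y_k\ \forall k\ge n\}$ for $m\ge n$ gives $\xi_m(\Delta)\to1$ for the joint law $\xi\in C(\P_x,\P_y)$, and any genuine coupling already lies in $\check C(\P_x,\P_y)$. The delicate step is $\mathrm G_1\Rightarrow\mathrm A_1$. From a $\xi\in\check C(\P_x,\P_y)$ with $\xi_k(\Delta)\to1$ one extracts the law $\rho_k$ of the common value on $\{\omega_k=\omega'_k\}$, a sub-probability with $\|\rho_k\|\to1$ and $\rho_k\ll P_k(x,\cdot)$, $\rho_k\ll P_k(y,\cdot)$, which instantly gives non-singularity of the pair at large times. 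This does \emph{not}, however, control the singular masses pairwise: a generalized coupling can simply ignore a persistent singular branch of $P_k(x,\cdot)$ (this already fails for a state with a branch into a disjoint absorbing set), so the passage to full asymptotic equivalence must use $\mathrm G_1$ \emph{globally}. The same extraction applied to auxiliary pairs, combined with the fact that $\xi^2\sim\P_y$ is carried by the Harris set and with the absorption argument of the previous paragraph, rules out such branches and delivers $\mathrm B_1$; I expect this global step to be the second main difficulty.

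Finally, when $(E,\Ecal)$ is Borel I close every arrow into an equivalence by producing the strongest condition $\mathrm C_1'$ out of $\mathrm B_1$. Given aperiodicity and Harris recurrence, the Nummelin splitting / Athreya--Ney construction yields a coupling of two copies of the chain started at $x$ and $y$ which coalesce at an almost surely finite \emph{simultaneous} meeting time $T$ (aperiodicity being exactly what synchronizes the meeting); setting the two paths equal after $T$ gives $\{X_k=Y_k\ \forall k\ge n\}\supseteq\{T\le n\}$ and hence $\P(X_k=Y_k\ \forall k\ge n)\to1$, i.e.\ $\mathrm C_1'$. The Borel hypothesis enters precisely here, through the regular conditional distributions defining the path coupling and the measurable selection of the maximal couplings at the splitting. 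Together with the core cycle this renders $\mathrm C_1',\mathrm{\mathring C_1},\mathrm G_1$ equivalent to all the remaining conditions.
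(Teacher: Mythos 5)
Your core cycle is sound and close in spirit to the paper's: the soft implications via the coupling equality, the periodicity contradiction, and the classical $\mathrm{B}_1\Rightarrow\mathrm{P}_1$ are all as in the paper. Your route to the Harris half of $\mathrm{A}_1\Rightarrow\mathrm{B}_1$ (irreducibility, then the Meyn--Tweedie decomposition into a maximal absorbing Harris set $H$ with $\mu(H)=1$, then the absorption trick forcing $P_n(x,H)\ge 1-\varepsilon$) is workable, though heavier than the paper's, which proves an elementary martingale ``Recurrence Lemma'' giving $Q(x,B)=1$ for \emph{all} $x$ directly under $\mathrm{A}_1$, so that Harris recurrence (with $\phi=\mu$) drops out without invoking the decomposition theorem. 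Your Nummelin-splitting construction of $\mathrm{C}_1'$ in the Borel case is likewise a legitimate alternative to the paper's route ($\mathrm{A}_1\Rightarrow\mathrm{C}_1'$ via a coupling kernel that attempts a maximal coupling on a set $C_{N,p}$ of pairs with TV distance bounded away from $1$, interpolated between multiples of $N$ by the gluing lemma).

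The genuine gap is $\mathrm{G}_1\Rightarrow\mathrm{A}_1$, which is a required arrow of the theorem and which you never actually prove: you defer it as ``the second main difficulty,'' and the sketch you give fails for a concrete reason, namely the asymmetry of $\check C(\P_x,\P_y)$. You take $\xi\in \check C(\P_x,\P_y)$ with $y$ in the Harris set $H$ and argue that $\xi^2\sim\P_y$ is carried by paths in $H$, so on $\{\omega_k=\omega_k'\}$ the first coordinate lies in $H$ and hence $\xi^1(\mbox{hit }H)=1$. But $\xi^1$ is only assumed $\ll\P_x$, so this yields merely $\P_x(\mbox{hit }H)>0$, not $=1$: the event ``never hit $H$'' can carry positive $\P_x$-mass while being $\xi^1$-null. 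That is exactly the $\mathrm{P}_2$-type conclusion (your own ``persistent singular branch into a disjoint absorbing set'' is the counterexample that absolute continuity of $\xi^1$ cannot exclude), and it cannot deliver $\mathrm{A}_1$ or $\mathrm{B}_1$. The missing idea is to exploit $\mathrm{G}_1$ for the \emph{reversed} ordered pair $(y,x)$, so that the equivalent marginal sits over $\P_x$. The paper's proof of this arrow is in fact short and needs no Harris theory at all: given $\xi\in\check C(\P_x,\P_y)$ with $\xi_k(\Delta)\to 1$, the equivalence $\xi^2\sim\P_y$ provides a uniform absolute-continuity statement (for every $\varepsilon>0$ there is $\delta>0$ with $\xi^2(\Gamma)<\delta\Rightarrow\P_y(\Gamma)<\varepsilon$), and $\xi_n(\Delta)>1-\delta$ then gives, for all large $n$, the implication $P_n(x,B)=0\Rightarrow\xi_n^1(B)=0\Rightarrow\xi_n^2(B)<\delta\Rightarrow P_n(y,B)<\varepsilon$; the coupling for $(y,x)$ gives the symmetric implication; finally, choosing $B_0$ maximizing $P_n(y,\cdot)$ among $P_n(x,\cdot)$-null sets and $C_0$ maximizing $P_n(x,\cdot)$ among $P_n(y,\cdot)$-null sets, the set $A:=E\setminus(B_0\cup C_0)$ satisfies $P_n(x,A)\ge 1-\varepsilon$, $P_n(y,A)\ge 1-\varepsilon$ with equivalent restrictions, i.e.\ $x$ and $y$ are asymptotically equivalent. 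Once this arrow is in place your architecture closes; without it, the chain $\mathrm{C}_1'\Rightarrow\mathrm{\mathring C}_1\Rightarrow\mathrm{G}_1\Rightarrow\mathrm{A}_1$ claimed in the statement is unproved. (A minor further point: your averaging argument for $\mathrm{C}_1\Rightarrow\mathrm{P}_1$ needs measurability of $y\mapsto d\big(P_n(x,\cdot),P_n(y,\cdot)\big)$, which the paper supplies separately.)
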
 

\begin{thm}\label{second}
$\mathrm{A}_2$, $\mathrm{B}_2$,  $\mathrm{C}_2$, $\mathrm{G}_2$, and  $\mathrm{P}_2$ are equivalent and are implied by $\mathrm{C}_2'$. If $(E,\Ecal)$ is Borel,
  then each of the equivalent conditions implies $\mathrm{C}_2'$. 
\end{thm}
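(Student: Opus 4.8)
The plan is to establish the cycle $\mathrm A_2\Rightarrow\mathrm B_2\Rightarrow\mathrm P_2\Rightarrow\mathrm A_2$, to dispatch $\mathrm A_2\Leftrightarrow\mathrm C_2\Leftrightarrow\mathrm G_2$ separately, and finally to treat $\mathrm C_2'$. The three coupling-type conditions are the quickest. Since two probability measures are non-singular iff their total variation distance is $<1$, the coupling equality shows that $P_n(x,.)\not\perp P_n(y,.)$ holds iff there is $\zeta\in C(P_n(x,.),P_n(y,.))$ with $\zeta(\Delta)>0$; this is exactly $\mathrm A_2\Leftrightarrow\mathrm C_2$. As every coupling lies in $\tilde C$, we get $\mathrm C_2\Rightarrow\mathrm G_2$; conversely, if $\zeta\in\tilde C(P_k(x,.),P_k(y,.))$ has $\zeta(\Delta)>0$, then the image of $\zeta$ restricted to $\Delta$ under either coordinate is a non-zero measure dominated by both $P_k(x,.)$ and $P_k(y,.)$, so these are non-singular and $\mathrm G_2\Rightarrow\mathrm A_2$. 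Finally $\mathrm C_2'\Rightarrow\mathrm C_2$ is immediate: reading off the time-$n$ marginals of the path coupling in $\mathrm C_2'$ produces couplings $\zeta_n\in C(P_n(x,.),P_n(y,.))$ with $\zeta_n(\Delta)\ge\P(X_k=Y_k\ \forall k\ge n)$, whose $\liminf$ is positive.

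For $\mathrm A_2\Rightarrow\mathrm B_2$ I would first note aperiodicity: if $P$ were $d$-periodic with classes $E_1,\dots,E_d$, then picking $x\in E_1$ and $y\in E_2$ (both of positive $\mu$-measure by invariance) forces $P_n(x,.)$ and $P_n(y,.)$ to be carried by the disjoint classes $E_{1+n}$ and $E_{2+n}$ (mod $d$) for every $n$, hence mutually singular for all $n$, contradicting $\mathrm A_2$. For $\mu$-irreducibility, fix $A$ with $\mu(A)>0$ and set $D:=\{x:L(x,A)=0\}$; a one-step argument shows $D$ is absorbing, i.e. $P(x,D)=1$ for $x\in D$. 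Supposing $D\ne\emptyset$ and picking $x_0\in D$, the measure $P_n(x_0,.)$ is carried by $D$, so applying $\mathrm A_2$ to the pair $(y,x_0)$ yields, for every $y$, some $n$ with $P_n(y,D)>0$, i.e. $L(y,D)>0$ for all $y\in E$. Since $D$ is absorbing, $P_n(y,D)\uparrow L(y,D)$, and invariance gives $\int L(y,D)\,\mu(dy)=\lim_n\int P_n(y,D)\,\mu(dy)=\mu(D)$; but $L(\cdot,D)\ge\1_D$ with strict inequality on $D^c$, forcing $\mu(D^c)=0$, whence $\mu(A)=\int_D P(y,A)\,\mu(dy)=0$, a contradiction. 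Hence $D=\emptyset$, so $P$ is $\mu$-irreducible and $\mathrm B_2$ holds.

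The implication $\mathrm B_2\Rightarrow\mathrm P_2$ rests on the classical theory: a $\phi$-irreducible chain with ipm $\mu$ is positive, and by Proposition \ref{irredu} $\mu$ is its maximal irreducibility measure, so the aperiodic ergodic theorem (\cite{MT09}) gives $P_n(x,.)\to\mu$ for $\mu$-almost all $x$, which is the first half of $\mathrm P_2$. For the second half, fix any $x$; using $\mu$-irreducibility reach the full-measure ``good'' set $G$ of convergence points at some time $n$ with $P_n(x,G)>0$, decompose $P_{n+m}(x,.)\ge\int_G P_n(x,dz)P_m(z,.)$, and observe that this converges in total variation to $P_n(x,G)\,\mu$ by dominated convergence, so the limiting overlap of $P_{n+m}(x,.)$ with $\mu$ is at least $P_n(x,G)>0$, giving $\lim_n d(P_n(x,.),\mu)<1$. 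For $\mathrm P_2\Rightarrow\mathrm A_2$ I would run the same overlap idea symmetrically: for fixed $x,y$ the second part of $\mathrm P_2$ provides $\rho:=P_{n_1}(x,.)\wedge\mu$ of mass $\beta>0$, and since $\rho\ll\mu$, $\int\rho(dz)P_m(z,.)\to\beta\mu$; thus $P_N(x,.)$ dominates a measure converging to $\beta\mu$ and, likewise, $P_N(y,.)$ dominates one converging to $\beta'\mu$, so for large $N$ the common mass $(P_N(x,.)\wedge P_N(y,.))(E)$ is close to $\min(\beta,\beta')>0$ and $P_N(x,.)\not\perp P_N(y,.)$.

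It remains, in the Borel case, to upgrade the equivalent conditions to $\mathrm C_2'$, and this is where the real work lies. The idea is the standard coalescent coupling: using measurable maximal couplings of the one-step kernel on the Borel space $E\times E$, build a Markov coupling $(X_k,Y_k)$ that moves diagonally after the meeting time $\tau:=\inf\{k:X_k=Y_k\}$, so that $\{X_k=Y_k\ \forall k\ge n\}\supseteq\{\tau\le n\}$ and $\P(X_k=Y_k\ \forall k\ge n)\uparrow\P(\tau<\infty)$. The positive meeting probability furnished by $\mathrm C_2$ (non-singularity at a single time) gives $\P_{(x,y)}(\tau<\infty)>0$ for every pair, yielding the first, $\liminf$-clause of $\mathrm C_2'$; for the second clause one must show $\P_{(x,y)}(\tau<\infty)=1$ for $\mu\otimes\mu$-almost every $(x,y)$, which I would derive from the $\mu$-a.e.\ total variation convergence already established together with a recurrence argument for the coupled chain hitting the diagonal. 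I expect the measurable selection of the coupling kernel and this almost-sure coalescence step to be the main obstacle, since they use the Borel structure essentially, in contrast to every other implication, which holds on a general countably generated space.
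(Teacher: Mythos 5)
Most of your argument is correct, and two steps take routes genuinely different from the paper's. Your treatment of $\mathrm{A}_2\Leftrightarrow\mathrm{C}_2\Leftrightarrow\mathrm{G}_2$, of $\mathrm{C}_2'\Rightarrow\mathrm{C}_2$, and of $\mathrm{P}_2\Rightarrow\mathrm{A}_2$ matches what the paper subsumes under Proposition \ref{easy} (your overlap argument for $\mathrm{P}_2\Rightarrow\mathrm{A}_2$ spells out what the paper calls ``easy''). For $\mathrm{A}_2\Rightarrow\mathrm{B}_2$ you replace the paper's route (the martingale-based Recurrence Lemma \ref{recur} plus Proposition \ref{AperProp}) by a direct absorbing-set argument: $D=\{x:L(x,A)=0\}$ is absorbing, $\mathrm{A}_2$ forces $L(y,D)>0$ everywhere, and invariance then forces $\mu(D^c)=0$ and $\mu(A)=0$, a contradiction. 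This is correct and more elementary. For $\mathrm{B}_2\Rightarrow\mathrm{P}_2$ you import from \cite{MT09} the $\mu$-a.e.\ TV convergence for positive recurrent aperiodic chains (via the decomposition into a maximal absorbing Harris set plus a $\mu$-null set), where the paper instead builds such an invariant Harris set $G$ by hand from the small set of Proposition \ref{small} and then applies Theorem \ref{first} on $G$; both routes ultimately rest on \cite[Theorem 5.2.2]{MT09} and on Proposition \ref{irredu}, and your overlap argument for the second clause of $\mathrm{P}_2$ is a correct quantitative version of the paper's one-line conclusion from $L(x,G)>0$. In both of your dominated-convergence steps you should invoke Proposition \ref{mbTV} for the measurability of $z\mapsto d(P_m(z,.),\mu)$.

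The genuine gap is the Borel-case upgrade to $\mathrm{C}_2'$, which you rightly identify as ``where the real work lies'' but then sketch in a way that would fail. A coalescent coupling driven by maximal couplings of the \emph{one-step} kernel extracts nothing from $\mathrm{C}_2$: the hypothesis gives non-singularity of $P_k(x,.)$ and $P_k(y,.)$ only at a pair-dependent time $k_{x,y}$, and it can happen that $P(u,.)\perp P(v,.)$ for the pairs your coupled chain actually visits, in which case \emph{every} one-step coupling has meeting probability $0$ (by the coupling equality) and your $\tau$ is infinite. Concretely, if $P(x,.)$ is uniform on $\{1,2\}$, $P(y,.)$ is uniform on $\{3,4\}$, and $1\mapsto a$, $2\mapsto b$, $3\mapsto b$, $4\mapsto a$ deterministically, then $P_2(x,.)=P_2(y,.)$, yet every coupling of the first steps has meeting probability $0$, and the choice supported on $\{(1,3),(2,4)\}$ (which is ``maximal'') destroys all future meetings: coupling must be attempted at the right \emph{times}, not stepwise. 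Likewise, ``a recurrence argument for the coupled chain hitting the diagonal'' cannot work: off the diagonal your chain moves like two independent copies, and for nonatomic chains independent copies never hit the (null) diagonal. What is missing is exactly the mechanism of Proposition \ref{atta}: by Proposition \ref{mbTV} one finds $N,p$ with $\mu\otimes\mu(C_{N,p})>0$, where $C_{N,p}=\{(x,y):d(P_N(x,.),P_N(y,.))\le 1-p\}$; the pair chain moves independently off $C_{N,p}$ and on $C_{N,p}$ attempts a maximal coupling of the $N$-step kernels using the kernel of \cite[Lemma 1]{KS15}, whose failure-law is absolutely continuous with respect to the product law so that the strong Markov property lets one iterate; the Recurrence Lemma applied to the product chain (with ipm $\mu\otimes\mu$) shows $C_{N,p}$ is hit a.s.\ from $\mu\otimes\mu$-a.e.\ pair and with positive probability from every pair under $\mathrm{A}_2$, which produces precisely the two clauses of $\mathrm{C}_2'$. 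Finally, the Borel hypothesis enters not through ``measurable selection of the coupling kernel'' (the kernel of \cite[Lemma 1]{KS15} exists under the paper's general assumptions) but through the gluing Lemma \ref{glu}, needed to interpolate the $N$-skeleton coupling into a coupling of full paths when $N\ge 2$.
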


\begin{thm}\label{third}
$\mathrm{A}_3$, $\mathrm{A}_3'$, $\mathrm{B}_3$,  $\mathrm{C}_3$, $\mathrm{G}_3$, and  $\mathrm{P}_3$ are equivalent and are implied by  $\mathrm{C}_3'$. If $(E,\Ecal)$ is Borel,
  then each of the equivalent conditions implies $\mathrm{C}_3'$. 
\end{thm}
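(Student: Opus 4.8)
The plan is to prove the equivalences around the cycle $\mathrm{B}_3\Rightarrow\mathrm{P}_3\Rightarrow\mathrm{A}_3'\Rightarrow\mathrm{A}_3\Rightarrow\mathrm{B}_3$, to insert $\mathrm{C}_3$ and $\mathrm{G}_3$ as reformulations of $\mathrm{A}_3$, and to deal with $\mathrm{C}_3'$ last. First I would dispose of the reformulations. By the coupling equality, $P_k(x,\cdot)\not\perp P_k(y,\cdot)$ is equivalent to the existence of a coupling $\zeta\in C(P_k(x,\cdot),P_k(y,\cdot))$ with $\zeta(\Delta)>0$, which gives $\mathrm{A}_3\Leftrightarrow\mathrm{C}_3$. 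Since any coupling lies in $\tilde C$, we have $\mathrm{C}_3\Rightarrow\mathrm{G}_3$; conversely, if $\zeta\in\tilde C(P_k(x,\cdot),P_k(y,\cdot))$ satisfies $\zeta(\Delta)>0$, then the image of $\zeta$ restricted to $\Delta$ under either coordinate is a nonzero measure that is absolutely continuous with respect to both $P_k(x,\cdot)$ and $P_k(y,\cdot)$, whence $\mathrm{G}_3\Rightarrow\mathrm{A}_3$. The implication $\mathrm{A}_3'\Rightarrow\mathrm{A}_3$ is immediate on taking $\varepsilon=1/2$ in the definition of asymptotic equivalence, and $\mathrm{C}_3'\Rightarrow\mathrm{C}_3$ follows by projecting the path coupling onto coordinate $k$, since then $\zeta(\Delta)=\P(X_k=Y_k)\to1$.

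Next I would establish $\mathrm{P}_3\Leftrightarrow\mathrm{B}_3$. For $\mathrm{P}_3\Rightarrow\mathrm{B}_3$, let $E_0$ be the full-measure set on which $P_n(x,\cdot)\to\mu$; since total variation convergence means $\sup_A|P_n(x,A)-\mu(A)|\to0$, for every $x\in E_0$ and every $A$ with $\mu(A)>0$ we have $P_n(x,A)>0$ eventually, which is weak irreducibility with $\phi=\mu$ and this single $E_0$. Aperiodicity holds because on a $d$-periodic cyclic class the quantity $P_n(x,E_1)$ equals $1$ when $d\mid n$ and $0$ otherwise, so it cannot converge to $\mu(E_1)\in(0,1)$. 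For $\mathrm{B}_3\Rightarrow\mathrm{P}_3$, I would first replace $E_0$ by the maximal absorbing subset of full measure (full because $\mu$ is invariant), turning weak irreducibility into genuine $\phi$-irreducibility on an absorbing set; an irreducible chain with a finite invariant measure is positive Harris recurrent on a further full-measure absorbing set $H$, aperiodicity passes to $H$, and Theorem \ref{first} applied to the chain on $H$ yields convergence for every $x\in H$, hence $\mathrm{P}_3$.

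For $\mathrm{P}_3\Rightarrow\mathrm{A}_3'$, almost every pair $(x,y)$ has both coordinates in $E_0$; writing the Lebesgue decomposition $P_n(x,\cdot)=f_n^x\,\mu+r_n^x$ with $r_n^x\perp\mu$, total variation convergence forces $f_n^x\to1$ in $L^1(\mu)$ and $r_n^x(E)\to0$, so on $A_n:=\{1/2\le f_n^x\le2\}\cap\{1/2\le f_n^y\le2\}$ both $P_n(x,\cdot)$ and $P_n(y,\cdot)$ are equivalent to $\mu|_{A_n}$ and carry mass at least $1-\varepsilon$ for large $n$, giving asymptotic equivalence. The closing implication $\mathrm{A}_3\Rightarrow\mathrm{B}_3$ is the heart of the matter. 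Aperiodicity follows as before, since two distinct cyclic classes carry permanently disjoint supports and so force $P_n(x,\cdot)\perp P_n(y,\cdot)$ for all $n$ on a set of pairs of positive $\mu\otimes\mu$-measure. For weak irreducibility I would first deduce that $\mu$ is ergodic: any nontrivial invariant splitting produces two disjoint absorbing sets of positive measure, and starting points from these sets have singular laws for every $n$, contradicting $\mathrm{A}_3$.

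The main obstacle is then to upgrade ergodicity to weak irreducibility. Ergodicity readily gives, for each fixed $A$ with $\mu(A)>0$, that $\{x:L(x,A)=0\}$ is absorbing and $\mu$-null, i.e.\ $A$ is reachable from $\mu$-almost every point; the difficulty is to pass to a single full-measure $E_0$ and a single $\phi$ from which all target sets are simultaneously reachable, a step that the naive countable intersection over a generating field does not deliver, because a positive-measure target need not contain a field set. I would resolve this by fixing a reference set, deleting the null set of points that can reach the (absorbing, null) set of points that never hit it, and then invoking the maximal-irreducibility-measure construction of Proposition \ref{irredu} (cf.\ \cite[Prop.\ 4.2.2]{MT09}) on the resulting absorbing full-measure chain to obtain a maximal measure $\psi\equiv\mu$ witnessing weak irreducibility, which closes the cycle. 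Finally, for the Borel case I would prove that $\mathrm{P}_3$ implies $\mathrm{C}_3'$ by constructing, for almost every pair, a coupling of $\P_x$ and $\P_y$ that coalesces forever from some finite time on: iterating maximal couplings of the one-step kernels along the convergence and using the regular conditional distributions available on a Borel space, and then showing $\P(X_k=Y_k\text{ for all }k\ge n)\to1$; this coalescing construction is the main technical point of the Borel direction.
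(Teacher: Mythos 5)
Most of your reductions are fine: $\mathrm{A}_3\Leftrightarrow\mathrm{C}_3$ via the coupling equality, $\mathrm{C}_3\Rightarrow\mathrm{G}_3\Rightarrow\mathrm{A}_3$, $\mathrm{A}_3'\Rightarrow\mathrm{A}_3$, $\mathrm{C}_3'\Rightarrow\mathrm{C}_3$, $\mathrm{P}_3\Leftrightarrow\mathrm{B}_3$, and $\mathrm{P}_3\Rightarrow\mathrm{A}_3'$ (the last needs the small repair of removing the $\mu$-null supports of the singular parts $r_n^x,r_n^y$ from $A_n$, since $P_n(x,\cdot)$ restricted to $A_n$ may still charge a $\mu$-null set; this is cosmetic). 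The genuine gap is the step you yourself call the heart of the matter, $\mathrm{A}_3\Rightarrow\mathrm{B}_3$, and your proposed fix does not close it. Fixing a reference set $B$, deleting the absorbing null set $N=\{x:\,L(x,B)=0\}$ and then the null set of points that can reach $N$ produces an absorbing full-measure set $E_0$ from which $B$ itself is reachable --- but nothing more. To invoke Proposition \ref{irredu} (or \cite[Proposition 4.2.2]{MT09}) you must already have $\phi$-irreducibility of the chain on $E_0$ for some non-trivial $\phi$: the maximal-irreducibility-measure construction upgrades one irreducibility measure to the maximal one; it does not manufacture irreducibility out of reachability of a single set. Reachability of $B$ from every point of $E_0$ does not imply reachability of every subset $A\subset B$ with $\mu(A)>0$ --- this is exactly the ``one exceptional null set per target, uncountably many targets'' problem you identified, reappearing one level down. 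So the input to Proposition \ref{irredu} is missing, the cycle does not close, and $\mathrm{A}_3$, $\mathrm{C}_3$, $\mathrm{G}_3$ remain mere consequences of the other conditions rather than equivalents.

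The paper deliberately avoids proving $\mathrm{A}_3\Rightarrow\mathrm{B}_3$ directly and closes the loop in two other ways. First, Proposition \ref{atta} proves $\mathrm{A}_3\Rightarrow\mathrm{P}_3$ by a coupling construction (following \cite{KS15}): pick $N,p$ with $\mu\otimes\mu(C_{N,p})>0$, where $C_{N,p}=\{(x,y):\,d(P_N(x,\cdot),P_N(y,\cdot))\le 1-p\}$, let two copies move independently until the pair hits $C_{N,p}$ (the Recurrence Lemma under $\mathrm{A}_3$ applies to the product chain), and attempt a coupling there using a kernel whose off-diagonal part is absolutely continuous with respect to the product kernel, so the recurrence argument survives failed attempts; no Borel assumption is needed for the conclusion $\mathrm{P}_3$. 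Second, Corollary \ref{coro} proves $\mathrm{A}_3'\Rightarrow\mathrm{B}_3$ using that asymptotic equivalence is a measurable equivalence relation (Lemma \ref{mbae}, Remark \ref{equirel}): for $\mu$-a.e.\ $x$ the single measurable set $R_x$ of points asymptotically equivalent to $x$ has full measure, $\mathrm{A}_1$ holds on $R_x$, and the third part of the Recurrence Lemma --- which has no exceptional sets --- gives Harris recurrence, hence $\mathrm{B}_1$, on an invariant subset; this is precisely the device that converts an almost-everywhere hypothesis into one fixed full-measure set, and it is what your argument lacks. Finally, your Borel-case sketch for $\mathrm{C}_3'$ has the same flavor of difficulty: iterating maximal couplings of the $n$-step kernels does not work as stated, because conditionally on a failed coupling attempt the law of the pair $(X_n,Y_n)$ need not be absolutely continuous with respect to $\mu\otimes\mu$, so the pair may land among the exceptional pairs for which no further coupling is available; the paper rules this out via the absolute-continuity property of the coupling kernel of \cite[Lemma 1]{KS15} together with the gluing Lemma \ref{glu}.
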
  

\begin{remark}
We do not know if the equivalence of all conditions with the same index holds even under our general conditions on the space $(E,\Ecal)$. We will comment on this in Remark \ref{allgemein}.
\end{remark}

\section{First results and the proof of Theorem \ref{first}}
Let us first state those implications in the theorems which are obvious from the definitions or are well-known.
\begin{prop}\label{easy} We have
  \begin{itemize}  
    \item [a)] $\mathrm{B}_1\Rightarrow\mathrm{P}_1$,
    \item [b)] $\mathrm{C}_1'\Rightarrow \mathrm{\mathring C_1}\Rightarrow   \mathrm{G}_1,\quad \mathrm{P}_1\Rightarrow \mathrm{\hat C}_1\Rightarrow \mathrm{C}_1\Rightarrow   \mathrm{A}_1$,
    \item [c)] $\mathrm{C}_2'\Rightarrow \mathrm{C}_2\Rightarrow   \mathrm{G}_2\Rightarrow \mathrm{A}_2,\quad \mathrm{P}_2 \Rightarrow   \mathrm{A}_2\Leftrightarrow \mathrm{C}_2$,
    \item [d)] $\mathrm{P}_3\Rightarrow  \mathrm{A}'_3  \Rightarrow\mathrm{A}_3, \quad  \mathrm{C}'_3\Rightarrow\mathrm{C}_3 \Leftrightarrow\mathrm{A}_3$, and 
$\mathrm{C}_3\Rightarrow\mathrm{G}_3\Rightarrow\mathrm{A}_3$.
  \end{itemize}
\end{prop}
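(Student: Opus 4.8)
The plan is to dispatch each implication in turn; almost all are immediate from the definitions, two use a Radon--Nikodym decomposition, and the single genuinely substantial step is $\mathrm{P}_2\Rightarrow\mathrm{A}_2$. I begin with the formal ones. Since every coupling is a generalized coupling (indeed $C(\nu_1,\nu_2)\subseteq\tilde C(\nu_1,\nu_2)$, and a coupling of $\P_x,\P_y$ lies in $\check C(\P_x,\P_y)$), the same witnessing measure gives $\mathrm{C}_2\Rightarrow\mathrm{G}_2$, $\mathrm{C}_3\Rightarrow\mathrm{G}_3$ and $\mathrm{\mathring C_1}\Rightarrow\mathrm{G}_1$. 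For $\mathrm{\hat C}_1\Rightarrow\mathrm{C}_1$ one only selects, for each $m$, an index $k_m$ with $\zeta[k_m](\Delta)\ge1-\tfrac1m$. The three implications $\mathrm{C}_1'\Rightarrow\mathrm{\mathring C_1}$, $\mathrm{C}_2'\Rightarrow\mathrm{C}_2$ and $\mathrm{C}_3'\Rightarrow\mathrm{C}_3$ all rest on the remark that the events $\{X_k=Y_k\text{ for all }k\ge n\}$ increase in $n$, so $\P(X_n=Y_n)\ge\P(X_k=Y_k\ \forall k\ge n)$; thus $\liminf_n>0$ forces $\Law(X_n,Y_n)\in C(P_n(x,.),P_n(y,.))$ to charge $\Delta$ for some $n$, and $\lim_n=1$ forces $\xi_m(\Delta)\to1$ for the joint law $\xi$. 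Finally $\mathrm{A}'_3\Rightarrow\mathrm{A}_3$ is immediate, since asymptotic equivalence with $\varepsilon<1$ produces a set $A$ on which $P_n(x,.)$ and $P_n(y,.)$ are equivalent and both positive, hence non-singular; and $\mathrm{B}_1\Rightarrow\mathrm{P}_1$ is the classical aperiodic--Harris convergence theorem \cite{MT09}.

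Next come the implications built on the coupling equality and the identity $d(\nu_1,\nu_2)=1-(\nu_1\wedge\nu_2)(E)$, where $\nu_1\wedge\nu_2$ denotes the common part; in particular $d<1$ is equivalent to non-singularity. For $\mathrm{P}_1\Rightarrow\mathrm{\hat C}_1$, the triangle inequality gives $d(P_m(x,.),P_m(y,.))\le d(P_m(x,.),\mu)+d(\mu,P_m(y,.))\to0$, and a maximal coupling $\zeta[m]$ then satisfies $\zeta[m](\Delta)=1-d(P_m(x,.),P_m(y,.))\to1$. The equivalences $\mathrm{A}_2\Leftrightarrow\mathrm{C}_2$ and $\mathrm{A}_3\Leftrightarrow\mathrm{C}_3$ follow the same way: non-singularity of $P_n(x,.),P_n(y,.)$ yields a maximal coupling with $\zeta(\Delta)=1-d>0$, while conversely $\zeta(\Delta)>0$ forces $d\le1-\zeta(\Delta)<1$. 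For $\mathrm{G}_2\Rightarrow\mathrm{A}_2$ and $\mathrm{G}_3\Rightarrow\mathrm{A}_3$ I argue by contraposition: were $P_k(x,.)\perp P_k(y,.)$ via a set $S$ with $P_k(x,.)(S)=0=P_k(y,.)(S^c)$, then any $\zeta\in\tilde C(P_k(x,.),P_k(y,.))$ would satisfy $\zeta^1(S)=\zeta^2(S^c)=0$; since $\Delta\subseteq(S\times S)\cup(S^c\times S^c)$, this gives $\zeta(\Delta)=0$, contradicting $\zeta(\Delta)>0$.

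Two implications use a density decomposition. For $\mathrm{C}_1\Rightarrow\mathrm{A}_1$, fix $\varepsilon>0$ and, via $\mathrm{C}_1$, an $n$ with $d(P_n(x,.),P_n(y,.))\le\varepsilon$. With $\rho:=P_n(x,.)+P_n(y,.)$, $f:=\tfrac{\dd P_n(x,.)}{\dd\rho}$, $g:=\tfrac{\dd P_n(y,.)}{\dd\rho}$ and $A:=\{f>0,\,g>0\}$, both restrictions have strictly positive densities on $A$, so $P_n(x,.)|_A\sim P_n(y,.)|_A$; and since the common part $\min(f,g)\,\rho$ is carried by $A$ with total mass $1-d\ge1-\varepsilon$, both $P_n(x,A)$ and $P_n(y,A)$ are $\ge1-\varepsilon$. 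This is exactly asymptotic equivalence of $x$ and $y$. The same computation, applied to any pair $x,y$ in the full-measure set $G:=\{z:P_n(z,.)\to\mu\}$ (where $d(P_n(x,.),P_n(y,.))\to0$), yields $\mathrm{P}_3\Rightarrow\mathrm{A}'_3$, measurability of the set of asymptotically equivalent pairs being Lemma \ref{mbae}.

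The main obstacle is $\mathrm{P}_2\Rightarrow\mathrm{A}_2$, because the given pair $x,y$ may lie among the ``bad'' starting points, so $P_n(x,.)$ need not approach $\mu$. The idea is to propagate the overlap with $\mu$ forward in time. For each $x$, the hypothesis $\lim_n d(P_n(x,.),\mu)<1$ says that $\lambda^x_m:=P_m(x,.)\wedge\mu$ has mass increasing to some $c_x>0$; as $\lambda^x_m\le\mu$ it is carried by $G$, and from $\lambda^x_m\le P_m(x,.)$ together with $\lambda^x_m\le\mu$, monotonicity of the kernel and invariance of $\mu$ give, for the forward-pushed measure $\lambda^x_m P^j:=\int P_j(z,\cdot)\,\lambda^x_m(\dd z)$, both $\lambda^x_m P^j\le P_{m+j}(x,.)$ and $\lambda^x_m P^j\le\mu$. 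The crucial step is that $\lambda^x_m P^j\to\lambda^x_m(E)\,\mu$ in total variation, since $d(\lambda^x_m P^j,\lambda^x_m(E)\mu)\le\int d(P_j(z,.),\mu)\,\lambda^x_m(\dd z)\to0$ by dominated convergence (the integrand vanishes on $G$ as $j\to\infty$). Fixing $m$ with $\lambda^x_m(E),\lambda^y_m(E)>0$, for large $j$ both $\lambda^x_m P^j$ and $\lambda^y_m P^j$ are TV-close to positive multiples of $\mu$; any separating set $S$ would force $\mu(S)$ to be simultaneously near $0$ and near $1$, so these two sub-measures are non-singular, and being dominated by $P_{m+j}(x,.)$ and $P_{m+j}(y,.)$ respectively, the transition probabilities themselves are non-singular --- which is $\mathrm{A}_2$ with $n=m+j$.
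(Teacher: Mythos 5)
Your proof is correct, and it follows the same route the paper itself indicates: the paper's entire proof consists of citing Meyn--Tweedie for a) and declaring the remaining implications ``either obvious or easy consequences of the coupling equality,'' and your write-up is a faithful filling-in of exactly that sketch --- the formal inclusions $C\subseteq\tilde C$, $C\subseteq\check C$, the selection arguments, and the maximal-coupling/common-part identity $d(\nu_1,\nu_2)=1-(\nu_1\wedge\nu_2)(E)$ are precisely what the coupling equality yields. The one place where you supply genuine work that the paper leaves entirely implicit is $\mathrm{P}_2\Rightarrow\mathrm{A}_2$, and you are right to single it out: the triangle inequality is useless there (it only bounds $d(P_n(x,.),P_n(y,.))$ by a quantity that can approach $2$), and non-singularity with $\mu$ is not transitive, so the overlap really must be propagated forward in time. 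Your argument --- the common part $\lambda^x_m=P_m(x,.)\wedge\mu$ is carried by the full-measure set of good starting points, its push-forward $\lambda^x_m P^j$ is dominated by $P_{m+j}(x,.)$ and converges in total variation to $\lambda^x_m(E)\,\mu$ (using invariance, Proposition \ref{mbTV} for measurability of the integrand, and dominated convergence), and two sub-measures that are TV-close to positive multiples of $\mu$ cannot be mutually singular --- is correct in every step and is, as far as one can tell from the paper's terse remark, the intended ``easy consequence.''
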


\begin{proof}
  Statement a) is a classical result and a proof can be found for example in \cite[p.~328]{MT09}.
  The remaining implications are either obvious or easy consequences of the coupling equality stated in the introduction.
\end{proof}

We continue by providing  a slightly generalized version of the {\em Recurrence Lemma} from \cite[Lemma 2]{KS15} that will turn out to be 
  useful later.

\begin{lemma}[Recurrence Lemma]\label{recur}
  Assume that $P$ satisfies Assumption $\mathrm{A}_3$. %or $P$ is weakly irreducible.
  Then for any $B\in\EE$ with $\mu(B)>0$, for $\mu$-almost every $x\in E$
  \begin{equation}\label{recureq}
    Q(x,B)= 1.
  \end{equation}
  If, moreover, $P$ satisfies Assumption $\mathrm{A}_2$, %or $P$ is irreducible,
  then
$$%\begin{equation}\label{recureq2}
Q(x,B)>0
$$%\end{equation}
holds for {\em every} $x \in E$.

If, moreover, $P$ satisfies Assumption $\mathrm{A}_1$, then \eqref{recureq} holds for {\em every} $x \in E$.
\end{lemma}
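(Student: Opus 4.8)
The plan is to study the single bounded harmonic function $h(x):=Q(x,B)$. First I would record two structural facts. Harmonicity, $h=Ph$ and hence $h(x)=\int h\,\dd P_n(x,\cdot)$ for every $n$, follows because the event $\{X_k\in B\text{ i.o.}\}$ is unchanged by discarding the first coordinate, so the Markov property gives $Q(x,B)=\int Q(z,B)\,P(x,\dd z)$. Next, setting $M_n:=h(X_n)$, the Markov property together with the tail nature of $\{X_k\in B\text{ i.o.}\}$ shows $M_n=\P_x(X_k\in B\text{ i.o.}\mid\Fcal_n)$ for the canonical filtration $(\Fcal_n)$, a bounded L\'evy martingale converging $\P_x$-a.s.\ to $\mathbf 1_{\{X_k\in B\text{ i.o.}\}}\in\{0,1\}$. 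Running this under the stationary law $\P_\mu:=\int\P_x\,\mu(\dd x)$, where $X_n\sim\mu$ for all $n$, the identically distributed variables $h(X_n)$ converge a.s.\ to a $\{0,1\}$-valued limit, so $h\in\{0,1\}$ holds $\mu$-a.e. Finally, Poincar\'e recurrence for the shift on path space under $\P_\mu$ gives $Q(x,B)=1$ for $\mu$-a.e.\ $x\in B$, so $G:=\{h=1\}$ satisfies $\mu(G)\ge\mu(B)>0$.

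Under $\mathrm A_3$ I would then upgrade this to $\mu(G)=1$. Since $h$ is harmonic and $\{0,1\}$-valued $\mu$-a.e., the identity $1=h(x)=\int h\,\dd P(x,\cdot)$ forces $P(x,G)=1$ for $\mu$-a.e.\ $x\in G$; invariance of $\mu$ ensures that $P(x,\cdot)$ charges no $\mu$-null set for $\mu$-a.e.\ $x$, so iterating gives $P_n(x,G)=1$ for all $n$ and $\mu$-a.e.\ $x\in G$, and symmetrically $P_n(y,F)=1$ for $\mu$-a.e.\ $y\in F:=\{h=0\}$. If $\mu(F)>0$, then for $\mu\otimes\mu$-a.e.\ pair $(x,y)\in G\times F$ the measures $P_n(x,\cdot)$ and $P_n(y,\cdot)$ live on the disjoint sets $G$ and $F$ for every $n$, i.e.\ $P_n(x,\cdot)\perp P_n(y,\cdot)$ for all $n$, contradicting $\mathrm A_3$ on the positive-measure set $G\times F$. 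Hence $\mu(F)=0$ and $\mu(G)=1$, which is \eqref{recureq} for $\mu$-a.e.\ $x$.

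For the two sharper statements I fix an arbitrary $x\in E$; since $\mathrm A_2$ and $\mathrm A_1$ both imply $\mathrm A_3$, the conclusion $\mu(G)=1$ is available, so I may pick some $y\in G$, i.e.\ $h(y)=1$, and as $h\le1$ the identity $1=h(y)=\int h\,\dd P_n(y,\cdot)$ forces $h=1$ $P_n(y,\cdot)$-a.e. Under $\mathrm A_2$, choose $n$ with $P_n(x,\cdot)\not\perp P_n(y,\cdot)$ and set $\rho:=P_n(x,\cdot)\wedge P_n(y,\cdot)$, which is nonzero by non-singularity; then $h=1$ $\rho$-a.e., so $h(x)=\int h\,\dd P_n(x,\cdot)\ge\int h\,\dd\rho=\rho(E)>0$, giving $Q(x,B)>0$. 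Under $\mathrm A_1$, for each $\varepsilon>0$ choose $n$ and $A\in\Ecal$ as in the definition of asymptotic equivalence; since $h=1$ $P_n(y,\cdot)$-a.e.\ and $P_n(x,\cdot)|_A\sim P_n(y,\cdot)|_A$, also $h=1$ $P_n(x,\cdot)|_A$-a.e., whence $h(x)\ge\int_A h\,\dd P_n(x,\cdot)=P_n(x,A)\ge 1-\varepsilon$; letting $\varepsilon\downarrow0$ yields $h(x)=1$, i.e.\ \eqref{recureq} for every $x$.

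The hard part is the a.e.\ statement under $\mathrm A_3$: proving the $\{0,1\}$-dichotomy of $h$ rigorously (the martingale-plus-stationarity step) and then converting a hypothetical positive-measure splitting $G,F$ into a genuine contradiction with non-singularity. The latter is where the measure-theoretic care lies, namely that the absorbing property $P_n(x,G)=1$ propagates to all $n$ for $\mu$-a.e.\ $x\in G$, and invariance of $\mu$ (which makes the kernels neglect $\mu$-null sets) is exactly what drives this. Once $\mu(G)=1$ is established, the passages to ``$>0$ for every $x$'' and ``$=1$ for every $x$'' are short harmonicity computations powered by non-singularity and by asymptotic equivalence, respectively.
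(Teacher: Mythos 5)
Your proof is correct and takes essentially the same route as the paper's: the bounded harmonic function $h=Q(\cdot,B)$, the stationarity-plus-martingale argument giving the $\{0,1\}$-dichotomy $\mu$-a.e., the disjoint-supports contradiction with $\mathrm{A}_3$, and the harmonicity computations under $\mathrm{A}_2$ and $\mathrm{A}_1$; the only real divergence is that you invoke Poincar\'e recurrence where the paper uses Birkhoff's ergodic theorem to obtain $\mu(\{h=1\})>0$, and either works. One caution: your claim that invariance of $\mu$ makes $P(x,\cdot)$ charge no $\mu$-null set for $\mu$-a.e.\ $x$ is false if read uniformly over all null sets (take $P(x,\cdot)=\delta_{T(x)}$ with $T$ measure-preserving and $\mu$ non-atomic), but the iteration it was meant to justify is unnecessary anyway: since $G=\{h=1\}$ exactly and $h=P_nh$ with $0\le h\le 1$, one gets $P_n(x,G)=1$ for \emph{every} $x\in G$ and every $n$ directly, which is precisely how you argue later in the $\mathrm{A}_2$ and $\mathrm{A}_1$ parts.
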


\begin{proof}
For $B\in \Ecal$ with $\mu(B)>0$ define $\psi(x):=Q(x,B)=\P_x(X_k\in B$ infinitely often), $x \in E$. Starting $X_0$ with law $\mu$, we see that
$\psi(X_n)$, $n \in \N_0$ is both stationary and a (bounded) martingale which converges to $\1_{\{X_k\in B \mbox{ i.o.}\}}$ almost surely which implies
$\psi(x)\in\{0,1\}$ for $\mu$-almost all $x \in E$. Let $\Psi_i=\{x:\,\psi(x)=i\}$, $i \in \{0,1\}$. Then, by the martingale property,
$P_n(x,\Psi_i)=1$ for all $n \in \N_0$ and for $\mu$-almost all $x \in \Psi_i$, $i \in \{0,1\}$. If A$_3$ holds, %or $P$ is weakly irreducible,
then (at least) one of the sets
$\Psi_0,\,\Psi_1$ has $\mu$-measure zero.
Since $\mu(B)>0$, Birkhoff's ergodic theorem implies $\mu\big(\Psi_1\big)>0$, so $\mu\big(\Psi_0\big)=0$ and $\mu\big(\Psi_1\big)=1$, finishing the proof of the first statement.

Let Assumption A$_2$ hold and fix $x \in E$. Since $P_n(y,\Psi_1)=1$ for $\mu$-almost all $y$ and all $n\in \N_0$, there exists some $y_0\in E$ such that  $P_n(y_0,\Psi_1)=1$
for all $n \in \N_0$. Now A$_2$ applied to $x$ and $y_0$ shows that there exists some $n \in \N$ such that $P_n(x,\Psi_1)>0$, finishing the proof of the second claim.

Let Assumption A$_1$ hold and fix $x \in E$.  As above, there exists some $y_0\in E$ such that  $P_n(y_0,\Psi_1)=1$
for all $n \in \N_0$. Now A$_1$ applied to $x$ and $y_0$ shows that $\lim_{n \to \infty}P_n(x,\Psi_1)=1$, so $x \in \Psi_1$ and therefore \eqref{recureq} holds. 
\end{proof}
\begin{prop}\label{AperProp}
A Markov kernel $P$ which satisfies Assumption $\mathrm{A}_{3}$ is aperiodic.
\end{prop}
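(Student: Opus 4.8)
The plan is to prove the contrapositive statement: if $P$ is $d$-periodic for some $d \ge 2$, then Assumption $\mathrm{A}_3$ fails. So I would assume that there are disjoint sets $E_1, \dots, E_d \in \Ecal$ with $\mu(E_1) > 0$ satisfying $P(x, E_{i+1 (\mathrm{mod}\, d)}) = 1$ for every $x \in E_i$, $1 \le i \le d$. The first (routine) step is to upgrade $\mu(E_1)>0$ to $\mu(E_i)>0$ for all $i$: invariance of $\mu$ gives
\[
  \mu(E_{i+1}) = \int_E P(x,E_{i+1})\,\mu(\dd x) \ge \int_{E_i} P(x,E_{i+1})\,\mu(\dd x) = \mu(E_i),
\]
and cycling through the indices forces equality, so $\mu(E_i)=\mu(E_1)>0$ for every $i$.

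The heart of the argument is to pin down the support of the $n$-step transition probabilities. By induction on $n$, using $P_{n+1}(x,\cdot)=\int P_n(x,\dd z)\,P(z,\cdot)$ together with the $d$-periodicity relation, one obtains $P_n\big(x, E_{i+n (\mathrm{mod}\, d)}\big)=1$ for all $n\in\N$ and all $x\in E_i$. Hence, whenever $x\in E_i$ and $y\in E_j$ with $i\not\equiv j \pmod d$, the measures $P_n(x,\cdot)$ and $P_n(y,\cdot)$ are concentrated on the disjoint sets $E_{i+n (\mathrm{mod}\, d)}$ and $E_{j+n (\mathrm{mod}\, d)}$ for every $n$, and are therefore mutually singular. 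In particular, there is no $n\in\N$ with $P_n(x,\cdot)\not\perp P_n(y,\cdot)$ for any such pair $(x,y)$.

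It then remains to observe that such pairs form a set of positive $\mu\otimes\mu$-measure, which directly contradicts $\mathrm{A}_3$. Since $1\not\equiv 2\pmod d$ for $d\ge2$, every pair in $E_1\times E_2$ is of the above type, and $(\mu\otimes\mu)(E_1\times E_2)=\mu(E_1)\mu(E_2)>0$ by the first step. Thus the pairs admitting the required index $n$ cannot have full $\mu\otimes\mu$-measure, so $\mathrm{A}_3$ does not hold. I do not expect a genuine obstacle here; the only points demanding care are the inductive support identity for $P_n$ and the (essentially immediate) fact that transition probabilities concentrated on distinct cyclic classes are mutually singular. Everything else is bookkeeping built on the invariance of $\mu$.
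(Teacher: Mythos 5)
Your proposal is correct and follows essentially the same route as the paper's proof: both argue by contraposition from $d$-periodicity, use the cyclic support identity $P_n(x,E_{i+n\,(\mathrm{mod}\,d)})=1$ to conclude $P_n(x,\cdot)\perp P_n(y,\cdot)$ for all $n$ when $x\in E_1$, $y\in E_2$, and contradict $\mathrm{A}_3$ via $(\mu\otimes\mu)(E_1\times E_2)>0$. The only difference is that you spell out the two routine steps (positivity of each $\mu(E_i)$ via invariance, and the induction for the support identity) that the paper states without proof.
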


\begin{proof}
Suppose $P$ has period $d\ge2$, and let  $E_{1}, E_{2},...,E_{d}$ be as in Definition \ref{aper}. Then $\mu(E_{i})>0$ for $i=1,2,...,d$.
Choose $x\in E_1$, $y\in E_2$, and $n\in\N$ arbitrarily. Then $P_{n}(x,E_{n+1 (\mathrm{mod}\, d)})=1$ and $P_{n}(y,E_{n+2 (\mathrm{mod}\, d)})=1$ and therefore $P_{n}(x,\cdot)\perp P_{n}(y,\cdot)$.
% Thus, for all $(x,y)\in E_{d-1}\times E_{d}$ and $n\in\mathbb{N}$ there holds $P_{n}(x,\cdot)\perp P_{n}(y,\cdot)$.
This contradicts Assumption A$_{3}$ since $(\mu\otimes\mu)( E_1\times E_2)>0$.
\end{proof}

\begin{cor}\label{coro}
$\mathrm{A}_1 \Rightarrow \mathrm{B}_1$,  $\mathrm{A}_2 \Rightarrow \mathrm{B_2}$, and  $\mathrm{A}_3' \Rightarrow \mathrm{B_3}$.
\end{cor}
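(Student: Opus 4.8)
The three implications share a common split: each of $\mathrm A_1,\mathrm A_2,\mathrm A_3'$ contributes an aperiodicity statement and a recurrence/irreducibility statement, and the aperiodicity part is uniform across all three. First I would record that $\mathrm A_1\Rightarrow\mathrm A_2\Rightarrow\mathrm A_3$ and $\mathrm A_3'\Rightarrow\mathrm A_3$ (the first because asymptotic equivalence of $x,y$ produces a common set on which $P_n(x,\cdot)$ and $P_n(y,\cdot)$ are equivalent and both positive, so $P_n(x,\cdot)\not\perp P_n(y,\cdot)$; the second by Proposition~\ref{easy}). By Proposition~\ref{AperProp} each of the three hypotheses therefore yields aperiodicity, and it remains to exhibit in each case a non-trivial $\sigma$-finite measure $\phi$ witnessing the relevant reachability property. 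In every case my first attempt is $\phi=\mu$, which is finite and non-trivial.

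The implications $\mathrm A_1\Rightarrow\mathrm B_1$ and $\mathrm A_2\Rightarrow\mathrm B_2$ then fall out directly from the Recurrence Lemma (Lemma~\ref{recur}), because its sharper conclusions are already stated for \emph{every} $x$. Under $\mathrm A_1$ the Lemma gives $Q(x,A)=1$ for every $x\in E$ and every $A$ with $\mu(A)>0$, which is exactly the Harris property with $\phi=\mu$; under $\mathrm A_2$ it gives $Q(x,A)>0$ for every $x$, whence $L(x,A)\ge Q(x,A)>0$ for every $x$ and every such $A$, i.e.\ $\mu$-irreducibility. No uniformization over $A$ is required here, since the statement is pointwise in $x$ to begin with.

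The genuinely delicate case is $\mathrm A_3'\Rightarrow\mathrm B_3$, where the Recurrence Lemma only yields, for each \emph{fixed} $A$ with $\mu(A)>0$, that $L(x,A)>0$ for $\mu$-almost every $x$, with a $\mu$-null exceptional set depending on $A$. Weak irreducibility with $\phi=\mu$ asks for one full set $E_0$ valid for all $A$ simultaneously, and merely intersecting the exceptional sets over a countable generating field is not enough: reaching every positive-measure generator does not force reaching every positive-measure set. My plan is to introduce the kernel $K:=\sum_{n\ge1}2^{-n}P_n$ (which satisfies $\mu K=\mu$ and $L(x,A)>0\iff K(x,A)>0$), to set $E_0:=\{x:\mu\ll K(x,\cdot)\}$, and to prove $\mu(E_0)=1$ in three steps. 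First, $\mu$ is ergodic: if an invariant set $I$ had $0<\mu(I)<1$, then for $\mu$-a.e.\ $x\in I$ and $\mu$-a.e.\ $y\in I^c$ one has $P_n(x,\cdot)\perp P_n(y,\cdot)$ for all $n$, contradicting $\mathrm A_3$ on the positive-measure set $I\times I^c$. Second, let $g(x)$ be the $\mu$-mass of the largest $K(x,\cdot)$-null set (the mass of the singular part of $\mu$ relative to $K(x,\cdot)$), so $x\in E_0\iff g(x)=0$; from $K(x,\cdot)\ge\tfrac12\int P(x,\mathrm dx')\,K(x',\cdot)$ one sees that a set unreachable from $x$ is unreachable from $P(x,\cdot)$-a.e.\ successor, so the maximal unreachable set only grows along the chain and $Pg\ge g$. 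Integrating against the invariant $\mu$ forces $Pg=g$ $\mu$-a.e., and a bounded harmonic function of an ergodic chain is $\mu$-a.e.\ constant, say $g\equiv c_0$. Third, if $c_0>0$ the maximal unreachable sets are $\mu$-a.e.\ constant along the stationary chain, so they agree $\mu$-a.e.\ with a single deterministic set $S$ with $\mu(S)=c_0>0$ that is unreachable from $\mu$-a.e.\ $x$; but the Recurrence Lemma with $B=S$ gives $K(x,S)>0$ for $\mu$-a.e.\ $x$, a contradiction. Hence $c_0=0$, $\mu(E_0)=1$, and $P$ is weakly irreducible with $\phi=\mu$ and $E_0$.

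The step I expect to be the main obstacle is the measurable bookkeeping in this last argument for a merely countably generated state space: one must check that $g$ is measurable, that a largest $K(x,\cdot)$-null set exists for each $x$ and grows along the chain, and, above all, that ``$\mu$-a.e.\ constant along the chain'' can be upgraded to a genuine deterministic set $S$. Measurability of $g$ can be secured via $g(x)=\mu(E)-\lim_n(\mu\wedge nK(x,\cdot))(E)$, whose ingredients are measurable in $x$ because the total variation may be computed as a supremum over a countable generating field. The passage to a deterministic $S$ I would carry out by applying the ergodic dichotomy to each generator separately and reassembling; this is transparent when $(E,\Ecal)$ is Borel, where regular disintegrations are available, and needs the extra care just indicated in the general case.
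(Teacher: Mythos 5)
Your treatment of the first two implications is correct and coincides with the paper's: aperiodicity comes from Proposition~\ref{AperProp} (via $\mathrm{A}_1\Rightarrow\mathrm{A}_2\Rightarrow\mathrm{A}_3$), and the pointwise conclusions of the Recurrence Lemma~\ref{recur} give the Harris property, respectively $\mu$-irreducibility, with $\phi=\mu$. The problem is the third implication, where you abandon $\mathrm{A}_3'$ and try to prove the formally stronger statement $\mathrm{A}_3\Rightarrow\mathrm{B}_3$ by a mod-$\mu$ argument. Most of that argument is in fact salvageable: ergodicity of $\mu$ under $\mathrm{A}_3$, the inequality $Pg\ge g$, the conclusion $g\equiv c_0$ $\mu$-a.e., and even the identification of the maximal unreachable sets $S_x$ with one deterministic set $S$ \emph{modulo $\mu$-null sets} can all be carried out (the last one via the jointly measurable Lebesgue density and a countable generating algebra, as you indicate). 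The genuine gap is the very last inference: from ``$S_x=S$ mod $\mu$ for $\mu$-a.e.\ $x$'' and ``$K(x,S_x)=0$'' you conclude that $S$ ``is unreachable from $\mu$-a.e.\ $x$'', i.e.\ $K(x,S)=0$. This does not follow, because $K(x,\cdot)$ is in general \emph{not} absolutely continuous with respect to $\mu$: the set $S\setminus S_x$ is $\mu$-null but may carry positive $K(x,\cdot)$-mass. Consequently there is no contradiction with the Recurrence Lemma: $Q(x,S)=1$ (hence $K(x,S)>0$) for $\mu$-a.e.\ $x$ is perfectly compatible with $K(x,S_x)=0$ for every $x$ --- the chain may hit $S$ only inside the $\mu$-null portion $S\setminus S_x$, which varies with $x$. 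So the case $c_0>0$ is never actually excluded by your argument.

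This is not a repairable bookkeeping issue but the central obstruction of the whole subject: statements that hold only modulo $\mu$ cannot control where the transition probabilities actually put their mass, which is precisely why the paper either runs the coupling construction of Proposition~\ref{atta} or, for this corollary, uses the full strength of $\mathrm{A}_3'$ rather than $\mathrm{A}_3$. (The fact that your proposal never invokes more than $\mathrm{A}_3$ is itself a warning sign: a soft direct proof of $\mathrm{A}_3\Rightarrow\mathrm{B}_3$ would short-circuit the hardest part of Theorem~\ref{third}.) The paper's own argument is much shorter: by $\mathrm{A}_3'$, Lemma~\ref{mbae} and Fubini, there is some $x$ whose asymptotic-equivalence class $R_x$ has $\mu(R_x)=1$; since asymptotic equivalence is an equivalence relation (Remark~\ref{equirel}), $\mathrm{A}_1$ holds for the chain restricted to $R_x$; Lemma~\ref{restrict} produces an absorbing full-measure subset on which the already-proven implication $\mathrm{A}_1\Rightarrow\mathrm{B}_1$ applies, and Harris recurrence on that set is exactly weak irreducibility on $E$. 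The point is that asymptotic equivalence, unlike mod-$\mu$ identities between sets, constrains the transition probabilities themselves, which is what allows the argument to close.
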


\begin{proof}
  Lemma \ref{recur}, Proposition \ref{AperProp} and Remark \ref{rem:irr} immediately imply the first two implications (with $\phi:=\mu$) but not the last one since the conclusion of the Recurrence Lemma under the assumption
  $\mathrm{A}_3$ (or the stronger assumption $\mathrm{A}_3' $) is weaker than weak irreducibility (the exceptional sets of $\mu$-measure 0 may depend on the set $B$ and there may be uncountably many such
  sets). Therefore, we argue as follows: for $x \in E$, let $R_x:=\{y \in E: x,y \mbox{ are as.~equiv.}\}$. Assumption $\mathrm{A}_3'$ and Lemma \ref{mbae} imply that $R_x \in \Ecal$ and $\mu(R_x)=1$
    for $\mu$-a.a.~$x \in E$. Fix $x \in E$ such that $\mu(R_x)=1$. Since asymptotic equivalence is an equivalence relation by Remark \ref{equirel}, it follows that property $ \mathrm{A}_1$ holds on
    $R_x$. Using Lemma \ref{restrict}, we see that  $\mathrm{B}_1$ holds on $R_x$ and hence $\mathrm{B}_3$ holds on $E$.
\end{proof}

Before we step into the proofs of Theorems \ref{first}, \ref{second}, and \ref{third}, we sketch how one can see that $\mathrm{A}_i$ implies $\mathrm{C}_i'$ for
$i \in \{1,2,3\}$. The proofs are largely identical to those in \cite{KS15} where the implications $\mathrm{\tilde A_1} \Rightarrow$ P$_1$,  A$_2 \Rightarrow \mathrm{\tilde P}_2$, and  A$_3 \Rightarrow$ P$_3$ were
shown (with $\mathrm{\tilde A_1}$ slightly stronger than $\mathrm{A_1}$ and  $\mathrm{\tilde P_2}$ slightly weaker than $\mathrm{P_2}$  and  without the assumptions that the state space is Borel). We will need the  Borel property only at the end of the proof when we apply the gluing lemma.

\begin{prop}\label{atta} We have
  $$
\mathrm{A}_3 \Rightarrow \mathrm{P}_3.
  $$
Further,  if $(E,\Ecal)$ is  Borel, then
  $$
  \mathrm{A}_1 \Rightarrow \mathrm{C}_1',  \mathrm{A}_2 \Rightarrow \mathrm{C}_2', \mbox{ and } \mathrm{A}_3 \Rightarrow \mathrm{C}_3'.
  $$
\end{prop}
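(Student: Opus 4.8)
The plan is to reduce all four implications to a single overlap/contraction mechanism for the \emph{product} chain, and to push the Borel hypothesis into the very last (gluing) step, exactly as announced before the statement. First I would record a monotonicity fact that lets me pass to the product chain. Writing $g_n(x,y):=d(P_n(x,.),P_n(y,.))$, the overlap mass $1-g_n(x,y)$ (the total mass of the common component $P_n(x,.)\wedge P_n(y,.)$) is non-decreasing in $n$: if $\rho\le P_n(x,.)$ and $\rho\le P_n(y,.)$ then $\int P(z,.)\,\rho(\dd z)$ is a common component of $P_{n+1}(x,.)$ and $P_{n+1}(y,.)$ of the same mass. Hence non-singularity persists in time, so $\mathrm{A}_2$ (resp.\ $\mathrm{A}_3$) for $P$ transfers — by choosing a common time for the two coordinates — to the corresponding property for the product kernel $P\otimes P$ on $(E\times E,\Ecal\otimes\Ecal)$, which has invariant measure $\mu\otimes\mu$; using the monotonicity of asymptotic equivalence from Remark \ref{equirel}, the same holds starting from $\mathrm{A}_1$. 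This is what lets me apply the Recurrence Lemma \ref{recur} to the product chain.

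For the general-space implication $\mathrm{A}_3\Rightarrow\mathrm{P}_3$ I would avoid building any measure on path space and argue only with $g_n$. Since $g_{n+1}\le (P\otimes P)g_n$ (joint convexity of $d$ under mixing, using $P_{n+1}(x,.)=\int P_n(x',.)\,P(x,\dd x')$) and $g_n\downarrow g_\infty$, dominated convergence gives $g_\infty\le(P\otimes P)g_\infty$; integrating against the invariant $\mu\otimes\mu$ forces equality $\mu\otimes\mu$-a.e., so $g_\infty$ is $(P\otimes P)$-harmonic a.e. Running the product chain $Z_n$ from $\mu\otimes\mu$, the same stationary-martingale-plus-Birkhoff argument used in Lemma \ref{recur} (the product chain being recurrent, hence having trivial invariant $\sigma$-field, by the previous paragraph) shows $g_\infty$ equals a constant $c$ $\mu\otimes\mu$-a.e., with $c<1$ because $\mathrm{A}_3$ makes $g_n<1$ somewhere. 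To see $c=0$ I would use a minorization: by measurability of $(x,y)\mapsto g_k(x,y)$ and an exhaustion there are $k$, $\delta>0$ and $H\in\Ecal\otimes\Ecal$ with $(\mu\otimes\mu)(H)>0$ on which the common mass $a=a(x,y)\ge\delta$. Splitting $P_k(x,.)=a\bar\rho+(1-a)\bar r_x$ and $P_k(y,.)=a\bar\rho+(1-a)\bar r_y$ with a shared probability component $\bar\rho$ and cancelling it in total variation gives $g_\infty(x,y)=(1-a)\lim_m d(\bar r_xP_m,\bar r_yP_m)\le(1-a)\int\int g_\infty\,\dd\bar r_x\,\dd\bar r_y$; since $\mu P_k=\mu$, for $\mu\otimes\mu$-a.a.\ $(x,y)$ the identity $g_\infty=c$ holds $P_k(x,.)\otimes P_k(y,.)$-a.e., hence $\bar r_x\otimes\bar r_y$-a.e.\ (as $\bar r_x\ll P_k(x,.)$), so choosing $(x,y)\in H$ with all these properties yields $c\le(1-\delta)c$ and thus $c=0$. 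Finally $d(P_n(x,.),\mu)\le\int g_n(x,y)\,\mu(\dd y)\to0$ for $\mu$-a.a.\ $x$ by Fubini and dominated convergence, which is $\mathrm{P}_3$.

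For the three Borel implications I would instead build an honest coupling realizing coincidence. At successive visits of the product chain to the coupling set $H$ I attempt to merge the two coordinates by sampling them equal from the common component $\bar\rho$ (success probability $\ge\delta$ per attempt), and once they agree I let them run together under the diagonal coupling of $P$ with itself. The Recurrence Lemma supplies infinitely many attempts — for every pair under $\mathrm{A}_1$, with positive probability for every pair and with probability one for a.a.\ pairs under $\mathrm{A}_2$, and with probability one for a.a.\ pairs under $\mathrm{A}_3$ — so a conditional Borel–Cantelli argument makes $\P\big(X_k=Y_k\ \forall k\ge n\big)$ tend to $1$ (respectively have positive $\liminf$) in exactly the pattern demanded by $\mathrm{C}_1'$, $\mathrm{C}_2'$ and $\mathrm{C}_3'$.

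The place where I expect the real work, and the only place the Borel assumption enters, is assembling these attempt-by-attempt conditional couplings into a single measure $\xi\in C(\P_x,\P_y)$ on $E^{\N_0}\times E^{\N_0}$: this needs measurable families of couplings of the residual measures and their concatenation, i.e.\ the gluing lemma, which relies on regular conditional distributions and hence on a Borel state space. The subsidiary obstacle is the uniformization producing $(k,\delta,H)$ from the purely qualitative $\mathrm{A}_i$, which rests on measurability of $g_k$ and of the asymptotic-equivalence relation (Lemma \ref{mbae}); everything else is elementary once the product chain inherits the $\mathrm{A}_i$ condition.
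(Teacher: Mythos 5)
Your proposal is correct, and at essentially the same level of detail as the paper's own proof (which is itself only an ``idea of the proof''), but your route to the general-space claim $\mathrm{A}_3\Rightarrow\mathrm{P}_3$ is genuinely different. The paper handles this claim and the Borel-case implications by a single construction: a coupling Markov chain on $E\times E$ whose transition kernel applies a maximal-coupling kernel (from \cite[Lemma 1]{KS15}) on the overlap set $C_{N,p}$ and the independent product kernel elsewhere; the Recurrence Lemma \ref{recur}, applied to the independent product chain, guarantees that coupling attempts occur infinitely often, and for $N\ge 2$ the resulting skeleton coupling is interpolated using the gluing lemma \ref{glu}, which is the only place the Borel hypothesis enters. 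Your proof of $\mathrm{A}_3\Rightarrow\mathrm{P}_3$ instead never touches path space: you show that $g_\infty(x,y)=\lim_n d\big(P_n(x,\cdot),P_n(y,\cdot)\big)$ is a.e.\ harmonic for $P\otimes P$, a.e.\ equal to a constant $c$ by the recurrence supplied by Lemma \ref{recur}, and then $c=0$ via the cancellation/minorization bound $g_\infty(x,y)\le(1-a)\iint g_\infty\,\dd\bar r_x\,\dd\bar r_y$ on the overlap set; this is more elementary (no maximal-coupling kernel, no measures on $E^{\N_0}$) and makes it transparent that neither the Borel assumption nor any coupling construction is needed for the first claim, at the price of the extra harmonicity/ergodicity bookkeeping. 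Your Borel-case constructions of $\mathrm{C}_1'$, $\mathrm{C}_2'$, $\mathrm{C}_3'$ coincide in substance with the paper's: merging from the common component at successive visits to the overlap set (your splitting into $a\bar\rho+(1-a)\bar r_x$ is exactly the maximal coupling), conditional Borel--Cantelli in the pattern dictated by the three parts of Lemma \ref{recur}, and the gluing lemma to assemble the $k$-step attempts into a genuine element of $C(\P_x,\P_y)$. One point where you are in fact more careful than the paper: you justify explicitly that $\mathrm{A}_i$ for $P$ transfers to $\mathrm{A}_i$ for the product kernel $P\otimes P$ (via persistence of non-singularity in time, respectively the monotonicity in Remark \ref{equirel}), which is what legitimizes invoking Lemma \ref{recur} for the product chain and which the paper uses only implicitly.
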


\begin{proof}[Idea of the proof]
  Under $\mathrm{A}_3$,  we define for $N \in \N$ and $p \in (0,1)$
  $$
C_{N,p}:=\big\{(x,y) \in E \times E: \,d\big( P_N(x,.),P_N(y,.)\big)\le 1-p\big\}.
  $$
  $C_{N,p}\in \Ecal\otimes \Ecal$ by  Proposition \ref{mbTV}   and   Assumption $\mathrm{A}_3$ implies $\mu\otimes \mu(C_{N,p})>0$ for some $N$ and $p$.
  Fix  $N$ and $p$ and write $C:=C_{N,p}$.
  Let us first assume that $N=1$ (this is without loss of generality for proving $\mathrm{A}_3 \Rightarrow \mathrm{P}_3$ but not 
  without loss of generality for proving $\mathrm{A}_3 \Rightarrow\mathrm{C}_3'$). In \cite{KS15}, the authors proceed by constructing a Markov chain
  $Z_n$, $n \in \N_0$ on the product space $E \times E$, which is a coupling of two chains with Markov kernel $P$ with transition kernel $S$ defined as
  $$
  S\big((x,y),.\big):=\left\{
    \begin{array}{ll}
      Q\big((x,y),.\big)&\mbox{if } (x,y)\in C\\
      R\big((x,y),.\big)&\mbox{otherwise.}
    \end{array}
    \right.
  $$
  Here, $R\big((x,y),.\big)$ is the product of $P\big(x,.\big)$ and  $P\big(y,.\big)$ and the kernel $Q$ satisfies $Q\big((x,y),\Delta\big)=1-d\big(P(x,.),P(y,.)\big)$
  and $Q\big((x,y),.\big)$ restricted to $(E \times E)\backslash \Delta$ is absolutely continuous with repect to the product of $P\big(x,.\big)$ and  $P\big(y,.\big)$
  (the fact that such a kernel $Q$ exists is stated in \cite[Lemma 1]{KS15}). The idea behind the definition of the kernel $S$ is the following: whenever
  the chain on $E \times E$ is in a state $(x,y)\in C$, then we try to couple the two coordinates in the next step by applying $Q$ which maximizes the coupling
  probability. Otherwise, we let the two coordinates move independently until the pair hits the set $C$. As soon as the chain $Z$ hits the diagonal $\Delta$ it
  remains in that state forever. It remains to ensure that the set $C$ is hit infinitely many times and therefore the process $Z_n$ will almost surely eventually hit
  $\Delta$. The fact that $(Z_n)$ will hit the set $C$ almost surely in finite time can be seen as follows: consider an {\em independent} coupling $(W_n)$ of two copies of the chain.
  Since $\mu\otimes \mu(C)>0$, the Recurrence Lemma shows that $(W_n)$ will hit the set $C$ almost surely in finite time for almost all initial conditions and even for all
  initial conditions if we assume A$_1$. Since, up to the first hitting time of the set $C$,
  the processes $W$ and $Z$ have the same law, $(Z_n)$ will also hit the set $C$ almost surely in finite time. If the coupling attempt at that time is unsuccessful, then the
  chain $Z$ again performs an independent coupling up to the next hit of $C$, which, by the same argument (and the strong Markov property and the assumptions on the kernel $Q$),
  is an almost sure event. The constructed coupling therefore shows that C$_1'$ holds under A$_1$ and both C$_3'$ and $\mathrm{P}_3$ hold under A$_3$. Further, under A$_2$, for any pair $x,y \in E$ the
  probability that the constructed coupling is successful, is strictly positive by the second part of the Recurrence Lemma, so C$_2'$ holds. This proves the claims in case $N$ in the definition of
  the set $C_{N,p}$ can be chosen to be 1.

  Finally, we assume that $N\ge 2$. The first claim follows from the case $N=1$ since $n \mapsto d(P_n(x,.),\mu)$ is non-increasing. To see the remaining claims, we apply the previous
  consideration to the skeleton chain evaluated at integer multiples of $N$ and  obtain corresponding couplings $Z_{nN}=(X_{nN},Y_{nN})$,
  $n \in \N_0$ for the skeleton chains as above. We have to make sure that these can be appropriately interpolated between successive multiples of $N$. This follows from an application of
  the gluing lemma in the appendix (which requires the state space to be  Borel) to each gap between successive multiples of $N$ (with conditionally independent interpolations), see
  \cite[p.43]{RR04} for a similar construction (it seems that the authors forgot to mention that this construction requires the space to be  Borel, see Remark \ref{allgemein}).
\end{proof}

\begin{proof}[Proof of Theorem \ref{first}] Observing Proposition \ref{easy}, Corollary \ref{coro} and Proposition \ref{atta} the claim follows once we prove that $\mathrm{G}_1\Rightarrow\mathrm{A}_1$.

$\mathrm{G}_1\Rightarrow\mathrm{A}_1$:  Fix a pair $(x,y)\in E\times E$. We show that $x$ and $y$ are asymptotically equivalent. Fix %$\xi \in \check C(\P_x,\P_y)$ and
  $\varepsilon>0$. By assumption there exists some $\xi \in \check C(\P_x,\P_y)$  such that $\lim_{k \to \infty}\xi_k(\Delta)=1$.
  Since $\xi^2$ and $\P_y$ are equivalent, we can find some $\delta>0$ such that for every  $\Gamma\in\EE^{\otimes \N_0}$ satisfying   $\xi^2(\Gamma)<\delta$, we have
  $\P_y(\Gamma)<\varepsilon$. Let $n_0\in \N_0$ be such that $\xi_k(\Delta)> 1-\delta$  for every $k\ge n_0$.   Then, for $B \in \Ecal$
  and $n \ge n_0$,
  $$
P_n(x,B)=0\; \Rightarrow \xi_n^1(B)=0\; \Rightarrow \xi_n^2(B)<\delta\; \Rightarrow P_n(y,B)<\varepsilon,
$$
where we used absolute continuity of $\xi_n^1$ with respect to $P_n(x,.)$ in the first step.
Reversing the roles of $x$ and $y$ we get $P_n(y,B)=0  \Rightarrow P_n(x,B)<\varepsilon$ for all $n\ge n_1$. Fix $n \ge n_0\vee n_1$ and let
$B_0 \in \EE$  be a set which maximizes  $P_n(y,B)$ among all sets $B \in \EE$ which satisfy $P_n(x,B)=0$ and let $C_0\in \EE$  be a set which maximizes
$P_n(x,C)$ among all sets $C \in \EE$ which satisfy $P_n(y,C)=0$. Define $A:=E\backslash (B_0 \cup C_0)$. Then $P_n(x,A)\ge 1-\varepsilon$, $P_n(y,A)\ge 1-\varepsilon$
and the restrictions of $P_n(x,.)$ and $P_n(x,.)$ to $A$ are equivalent. The claim follows since $\varepsilon>0$ was arbitrary.
\end{proof}

\section{Proofs of Theorems \ref{second} and \ref{third}}

\begin{proof}[Proof of Theorem \ref{second}]

  Thanks to Proposition \ref{easy}, Corollary \ref{coro} and Proposition \ref{atta}, the theorem is proved once we establish $\mathrm{B}_2\Rightarrow\mathrm{P}_2$.
  Rather than adapting the proof of $\mathrm{B}_1\Rightarrow\mathrm{P}_1$ we prefer to argue
  along the following lines: if $\mathrm{B}_2$ holds, then we show that there exists an invariant set  $E_0\subset E$ (i.e.~$E_0 \in \Ecal$ and $P(x,E_0)=1$ for all $x \in E_0$) of full
  $\mu$-measure on which $\mathrm{B}_1$ holds and hence, by Theorem \ref{first}, $\mathrm{P}_1$ holds. Then we show that  $\mathrm{P}_2$ holds on the full space $E$.

$\mathrm{B}_2\Rightarrow\mathrm{P}_2$:  We are not aware of a simple direct proof that there exists a subset of full $\mu$-measure on which  $\mathrm{B}_1$ holds. Even though
($\mu$-)irreducibility implies that $Q(x,B)=1$ for every $B\in \Ecal$ for which $\mu(B)>0$ and $\mu$-almost every $x \in E$, the exceptional sets may depend on $B$ and there are
(typically) uncountably many such sets $B$.

Since $P$ is irreducible, Proposition \ref{small} shows that there exists a small set $C\in \Ecal$ (with $\nu$ and $m$ as stated there).
We can and will assume that $\nu(E\backslash C)=0$. Define $G:=\{x \in E: Q(x,C)=1\}$. Then $G \in \Ecal$, $G$ is invariant, and $\mu(G)=1$. 
We claim that property $\mathrm{B}_1$ holds on $G$. All we have to show is that $Q(x,B)=1$ for all $x \in G\cap C$ and all $B \in \Ecal$ such that $\mu(B)>0$. Fix such a set $B$
and let $H:=\{x \in G \cap C:\,Q(x,B)=1\}$. Then
$\mu(H)=\mu(C)>0$ and for $x \in H$ we have $P_m(x,H)=P_m(x,C)\ge \nu(C)>0$.  Assume that $y \in G \cap C$ satisfies $Q(y,B)<1$ (i.e.~$y \notin H$). Then,
$P_m(y,H)\ge \nu(H)=\nu(C)$ (since $0=P_m(x,C\backslash H)\ge \nu(C\backslash H)$ for $x \in H$). This means that, whenever the chain is in the set $(C\cap G)\backslash H$, then with
probability at least $\nu(C)>0$ it will hit the set $H$ after $m$ steps. Since the chain starting at $y \in G\cap G$ visits $C\cap G$ infinitely often (almost surely), it follows that
$L(y,H)=1$, contradicting our assumption on $y$. Using Lemma \ref{restrict}, $G$ equipped with the trace $\sigma$-field satisfies our assumption on the state space and we see that
property $\mathrm{B}_1$ holds on $G$.

Theorem \ref{first} shows that property $\mathrm{P}_1$  holds on $G$. Then, clearly, property  $\mathrm{P}_3$  holds on $E$. Since $P$ is irreducible, we have $L(x,G)>0$ and
hence $\lim_{n \to \infty}d(P_n(x,.),\mu)<1$ for every $x \in E$ and therefore  $\mathrm{P}_2$  holds on $E$.
\end{proof}

\begin{proof}[Proof of Theorem \ref{third}] By  Proposition \ref{easy}, Corollary \ref{coro} and Proposition \ref{atta} it suffices to show that $\mathrm{B}_3\Rightarrow\mathrm{P}_3$.

$\mathrm{B}_3\Rightarrow\mathrm{P}_3$: We can argue like in the proof of  $\mathrm{B}_2\Rightarrow\mathrm{P}_2$ (the present argument is even easier). Using the very definition of
weak irreducibility, we find an invariant set $E_0$ of full $\mu$-measure on which $\mathrm{B}_2$ and hence, using Theorem \ref{second}, $\mathrm{P}_2$ hold. Therefore, $\mathrm{P}_3$
holds on $E$.
\end{proof}

\section{Complements, examples, and open problems}\label{Examples}

\begin{remark}\label{uniq}
  We show that Property $\mathrm{P}_2$ implies uniqueness of $\mu$ (as claimed in Remark \ref{12}): assume that $\mu$ and $\tilde \mu$ are
  different ipm's and let $\hat \mu:=\frac 12\big(\mu+\tilde \mu\big)$. Since $\mathrm{P}_2\Leftrightarrow \mathrm{A}_2$ and property $\mathrm{A}_2$ is independent of the chosen ipm,
  we see that $\mathrm{P}_2$ holds with respect to both $\mu$ and $\hat \mu$, so    
  $P_n(x,.)$ converges to $\mu$ for $\mu$-almost all $x$ and to $\hat \mu$ for $\hat \mu$-almost all $x$. Since $\hat \mu\ll \mu$ and $\hat \mu\neq \mu$ this is a contradiction
  (this proof is adapted from \cite[Proof of Corollary 1]{KS15}).
\end{remark}

\begin{ex}\label{peri}
  Let $E:=\{0,1\}$ and $P(0,\{1\})=P(1,\{0\})=1$. Then the unique invariant probability measure $\mu$ is given by $\mu(\{0\})=\mu(\{1\})=1/2$. For this example,
  the second part of property
  P$_2$ holds but the first part doesn't, so the first part of P$_2$ cannot be deleted without changing the class of chains for which P$_2$ holds.
\end{ex}

\begin{ex}\label{simple}
    Let $E:=\N_0$ with the discrete $\sigma$-field $\Ecal$. Define $P(x,\{x-1\})=1$ for $x \ge 2$, $P(1,\{1\})=1$ and $P(0,\{x\})=2^{-x}$ for $x \in \N$.
    Clearly all transition probabilities converge to $\mu=\delta_1$ but $P_n(0,.)$ and $P_n(1,.)$ are non-equivalent for every $n \in \N$ (but the states 0 and 1 are asymptotically
    equivalent).
\end{ex}
  
\begin{ex}\label{standard} (cf.~\cite[Example 5]{KS18}.)
    Let $E:=\N_0$ with the discrete $\sigma$-field $\Ecal$. Define  $P(0,\{0\})=1$ and $P(x,\{x-1\})=1/3$ and $P(x,\{x+1\})=2/3$ for $x \in \N$.
    Clearly, $\mu=\delta_0$ is the unique invariant probability measure and $P_n(x,.)$ does not converge to $\mu$ if $x>0$, so $P$ satisfies $\mathrm{P}_2$ but not $\mathrm{P}_1$.
    Note that for each $x,y \in E$ and $k \ge x\wedge y$, $\zeta:=\delta_0\otimes \delta_0$ satisfies  $\zeta \in \tilde C(P_k(x,.),P_k(y,.))$ and $\zeta(\Delta)=1$, showing that
    if ``$>0$'' in Assumption G$_2$ is replaced by ``$=1$'', then the condition does {\em not} imply $\mathrm{G}_1$.
\end{ex}

\begin{remark}
  Note that Assumption  G$_1$ is formally weaker than requiring that for each pair $(x,y)\in E \times E$ there exists some $\xi \in \tilde  C(\P_x,\P_y)$
  such that $\xi^1 \sim \P_x$ and $\xi^2 \sim \P_y$, but these two conditions are in fact equivalent: according to G$_1$ we find, for each pair $(x,y)$,
  some $\check \xi \in \check C(\P_x,\P_y)$ such that $\lim_{k \to \infty}\check\xi_k(\Delta)=1$ and some $\hat \xi \in \check C(\P_y,\P_x)$ such that $\lim_{k \to \infty}\hat \xi_k(\Delta)=1$.
  Then $\xi:=\frac 12 \check\xi + \frac 12 \hat \xi$ satisfies the formally stronger condition. 
\end{remark}

\begin{remark}
  One may ask whether it is sufficient for P$_1$ to hold if for each pair $(x,y)\in E \times E$ and each $k \in \N_0$ there exists some probability measure $\zeta_k$ on
  $(E \times E,\Ecal\otimes \Ecal)$ whose marginals are equivalent to $P_n(x,.)$ and $P_n(y,.)$ respectively, such that $\lim_{n \to \infty}\zeta_k(\Delta)=1$. Again, Example \ref{standard}
  provides a negative answer. Consider $\xi$ as in the previous example. 
  Then $\lim_{k \to \infty}\xi_k(\Delta)\ge\lim_{k \to \infty}\xi_k(\{(0,0)\}) =1$. Note that the marginals of the measures  $\xi_k$ are equivalent to $P_k(x,.)$ and $P_k(y,.)$
  respectively  but that $\xi^1$ and $\xi^2$ are not equivalent to $\P_x$ respectively $\P_y$.  
\end{remark}

\begin{remark} From Theorem \ref{first} we know that $\mathrm{C}_1\Rightarrow\mathrm{P}_1$ holds since $\mathrm{C}_1\Rightarrow\mathrm{A}_1\Rightarrow\mathrm{B}_1\Rightarrow\mathrm{P}_1$.
  Here we present an essentially  well-known direct proof. For $x \in E$, $n \in \N$, and $A \in \EE$ we have
  \begin{align*}
|\mu(A)-P_n(x,A)|&=\Big|\int_E P_n(y,A)\,\dd \mu(y)-P_n(x,A)\Big|=\Big|\int_E \Big(P_n(y,A)-P_n(x,A)\Big)\, \dd \mu(y)\Big|\\
&\le\int_E \Big| P_n(y,A)-P_n(x,A)\Big| \, \dd \mu(y)\le \int_E d\Big(P_n(y,.), P_n(x,.)\Big) \, \dd \mu(y)
\end{align*}

which converges to 0 by dominated convergence (note that Proposition \ref{mbTV} shows that the last integrand is measurable with respect to $y$), so the claim follows.

In fact, a slight modification of the proof shows the result without employing Proposition \ref{mbTV} (and without assuming that $\Ecal$ is countably generated):

fix $x$ and let $R_n(y,A):=\Big| P_n(y,A)-P_n(x,A)\Big|$, $n \in \N$. There exist sets $A_n \in \Ecal$ such that
$$
U_n:=\sup_{A \in \Ecal} \Big(\int_E R_n(y,A)\,\dd \mu(y)\Big)\le \int_E R_n(y,A_n)\,\dd \mu(y) +2^{-n},
$$
which converges to 0 as $n \to \infty$ by dominated convergence.
\end{remark}

\begin{remark}\label{allgemein}
  It seems to be an open question whether all properties stated in Theorem \ref{first} are equivalent even in the case in which $(E,\Ecal)$ is not Borel (and similarly for Theorems
  \ref{second} and \ref{third}). The present proof which is based on the gluing lemma \ref{glu} can not be applied in this case:  \cite{BPR15} contains an example of a
  separable and metric space equipped with its Borel $\sigma$-field for which the conclusion in the gluing lemma fails. 
\end{remark}

\appendix

\section{Auxiliary results and measurability issues}

\subsection{$\mu$-irreducibility and the existence of small sets}

We start with a proposition which was announced in Remark \ref{rem:irr} and whose proof is inspired by that of \cite[Proposition 4.2.2]{MT09}.

\begin{prop}\label{irredu}
If $P$ is $\phi$-irreducible, then $P$ is $\mu$-irreducible.
\end{prop}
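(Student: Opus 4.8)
The plan is to prove the statement directly in the form of $\mu$-irreducibility: assuming $P$ is $\phi$-irreducible, I want to show that for every $A\in\EE$ with $\mu(A)>0$ and every $x\in E$ one has $L(x,A)>0$. I would argue by contradiction and suppose there exist a set $A$ with $\mu(A)>0$ and a point $x_0$ with $L(x_0,A)=0$. The central object is the avoidance set $F:=\{y\in E:\ L(y,A)=0\}$, which contains $x_0$ and is measurable since $y\mapsto L(y,A)$ is measurable. The whole argument rests on showing that $F$ is absorbing and then playing the two measures $\phi$ and $\mu$ against each other on $F$ and $F^c$.

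First I would establish that $F$ is absorbing, i.e.\ $P(y,F)=1$ for every $y\in F$. This follows from the first-step decomposition $L(y,A)=P(y,A)+\int_{A^c}P(y,\dd z)\,L(z,A)$: if $L(y,A)=0$, then both non-negative terms vanish, forcing $P(y,A)=0$ and $L(z,A)=0$ for $P(y,\cdot)$-almost every $z$, i.e.\ $P(y,F)=1$. Next, using invariance of $\mu$ together with $P_1(y,A)\le L(y,A)=0$ for $y\in F$, I obtain $\mu(A)=\int_{F^c}P_1(x,A)\,\dd\mu(x)\le\mu(F^c)$, so that $\mu(F^c)>0$. On the other hand, since $x_0\in F$ and $F$ is absorbing, the chain started at $x_0$ satisfies $P_n(x_0,F)=1$ for all $n$ and hence $L(x_0,B)=0$ for every $B\subseteq F^c$; $\phi$-irreducibility then forces $\phi(B)=0$, so $\phi(F^c)=0$ and therefore $\phi(F)>0$.

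With $\phi(F)>0$ in hand, $\phi$-irreducibility yields $L(x,F)>0$ for every $x\in E$. The final and decisive step transfers this accessibility statement back to $\mu$. Because $F$ is absorbing, the indicators $\1_{\{X_n\in F\}}$ are non-decreasing in $n$, increasing to $\1_{\{\exists n\ge 0:\,X_n\in F\}}$. Starting the chain from $\mu$, invariance gives $\P_\mu(X_n\in F)=\mu(F)$ for every $n$, while monotone convergence gives $\P_\mu(X_n\in F)\to\P_\mu(\exists n\ge 0:\,X_n\in F)=\int_E h(x)\,\dd\mu(x)$, where $h(x)=\P_x(\exists n\ge 0:\,X_n\in F)$ equals $1$ on $F$ and equals $L(x,F)$ on $F^c$. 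Hence $\mu(F)=\mu(F)+\int_{F^c}L(x,F)\,\dd\mu(x)$, which forces $\int_{F^c}L(x,F)\,\dd\mu=0$; since $L(\cdot,F)>0$ everywhere, this gives $\mu(F^c)=0$, contradicting $\mu(F^c)>0$ and completing the proof.

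I expect the main obstacle to be exactly this last transfer from $\phi$ to $\mu$: the available Remark \ref{rem:irr} only gives $\phi\ll\mu$, which is the wrong direction for deducing information about $\mu$ from $\phi(F^c)=0$. One therefore cannot argue by pure measure theory and must exploit the dynamics, namely the monotonicity of $\1_{\{X_n\in F\}}$ under the stationary law, to convert the everywhere-accessibility of $F$ coming from irreducibility into $\mu(F)=1$. An alternative route would be to invoke the maximal irreducibility measure $\psi$ as in \cite[Proposition 4.2.2]{MT09}, prove $\mu\ll\psi$ using the property that $\psi(A)=0$ implies $\psi(\{y:L(y,A)>0\})=0$, and combine this with $\psi\ll\mu$ from Remark \ref{rem:irr} to conclude $\mu\sim\psi$; but the direct argument above appears shorter and self-contained.
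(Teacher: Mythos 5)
Your proof is correct and complete, but it follows a genuinely different route from the paper's. You argue by contradiction via the avoidance set $F=\{y\in E:\,L(y,A)=0\}$: the first-step decomposition shows $F$ is absorbing; invariance of $\mu$ gives $\mu(F^c)\ge\mu(A)>0$; since the chain started at $x_0\in F$ never leaves $F$, $\phi$-irreducibility forces $\phi(F^c)=0$, hence $\phi(F)>0$ and so $L(x,F)>0$ for every $x\in E$; finally the stationarity identity $\mu(F)=\P_\mu(\exists n\ge 0: X_n\in F)=\mu(F)+\int_{F^c}L(x,F)\,\dd\mu(x)$ yields $\mu(F^c)=0$, a contradiction. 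The paper instead works forward: it takes the Lebesgue decomposition of $\phi$ with respect to $\mu$ (a set $B$ on which $\phi\sim\mu$, with $\phi(B^c)=0$), introduces the resolvent-type measure $\nu(\cdot)=\int_B\sum_{m}2^{-m}P_m(y,\cdot)\,\dd\mu(y)$, proves $\nu\sim\mu$ by a separate contradiction argument on the set $G$ where $\nu\sim\mu$, and then obtains $L(x,C)>0$ for any $C\subset B^c$ with $\mu(C)>0$ by chaining two reachability steps (from $x$ into a set $\tilde D\subset B$ of positive $\mu$-measure via $\phi$-irreducibility, and from $\tilde D$ into $C$ because $\nu(C)>0$). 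Your argument is shorter and more elementary---it needs no Radon--Nikodym/Lebesgue decomposition and no auxiliary measure, only the first-step decomposition, invariance, and monotone convergence under $\P_\mu$---whereas the paper's proof produces along the way the structural fact $\mu\sim\nu$, in the spirit of the maximal-irreducibility-measure argument of Meyn and Tweedie from which it is adapted. Two points in your write-up deserve an explicit sentence each: the monotonicity of $n\mapsto\1_{\{X_n\in F\}}$ is only an almost-sure statement, justified because $\P_\mu\big(X_n\in F,\,X_{n+1}\notin F\big)=\E\big[\1_{\{X_n\in F\}}P(X_n,F^c)\big]=0$ for each $n$ (so the exceptional set is a countable union of null sets); and the measurability of $y\mapsto L(y,A)$, hence of $F$, follows since $L(y,A)$ is the increasing limit of $\P_y\big(\bigcup_{n=1}^N\{X_n\in A\}\big)$, each of which is an iterated integral of the kernel.
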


\begin{proof}
  Let $P$ be $\phi$-irreducible. Then $\phi \ll \mu$ (see Remark \ref{rem:irr}) and, due to Lebesgue's theorem, there exists a
  set $B \in \Ecal$ such that $\phi$ and $\mu$ restricted to $B$ are equivalent and $\phi(B^c)=0$.
  Note that $\mu(B)>0$. If $\mu(B^c)=0$, then $\phi \sim \mu$ and we are done, so we assume that $\mu(B^c)>0$. We
  have to show that for any measurable set $C \subset B^c$ such that $\mu(C)>0$ we have $L(x,C)>0$ for every $x \in E$. Fix such
  $x$ and $C$ and define the measure
$$
\nu(.):=\int_B\sum_{m=1}^\infty 2^{-m}P_m(y,.)\,\dd \mu(y).
$$
Invariance of $\mu$ implies $\nu \ll \mu$ and that the restriction of both measures to $B$ are equivalent.
Let $G \in \Ecal$ be a set such that $\nu \sim \mu$ on $G$, $\nu(G^c)=0$ and $B \subset G$.

First, we assume that $\mu(G^c)>0$. Let $m_0\in \N$ be such that
$\int_{G^c}P_{m_0}(y,G)\,\dd \mu(y)>0$ (such an $m_0$ exists since $P$ is $\phi$-irreducible). Using invariance of  $\mu$,
we obtain
$$
\int_GP_{m_0}(y,G^c)\,\dd \mu(y)=\int_{G^c}P_{m_0}(y,G)\,\dd \mu(y)>0.
$$

Therefore, there exists some $\varepsilon_1>0$ such that for  $D:=\{y \in G:\,P_{m_0}(y,G^c)\ge \varepsilon_1\}$, we have $\mu(D)>0$ and hence
$\nu(D)>0$, which means that there exists some $m_1\in \N$ such that $\int_B P_{m_1}(y,D)\,\dd \mu(y)>0$. 

Therefore,
\begin{align*}
  \nu(G^c)&\ge \int_B 2^{-m_0-m_1}  P_{m_0+m_1}(y,G^c)\,\dd \mu(y)\\
          &\ge 2^{-m_0-m_1}  \int_B \int_D P_{m_0}(z,G^c)  P_{m_1}(y,\dd z)\,\dd \mu(y)\\
          &\ge 2^{-m_0-m_1}  \varepsilon_1 \int_B P_{m_1}(y,D)\,\dd \mu(y)>0,
\end{align*}
contradicting the definition of $G$, so $\mu(G^c)=0$.

In this  case $\mu \sim \nu$ and so $\nu(C)>0$ which implies that there exist some $\varepsilon_2>0$ and $m_2 \in \N$
such that $\tilde D:=\{y \in B:\,P_{m_2}(y,C)\ge \varepsilon_2\}$ satisfies $\mu(\tilde D)>0$. $\phi$-irreducibility and the
definition of the set $B$ imply $L(x,\tilde D)>0$, which, together with the definition of $\tilde D$, implies $L(x,C)>0$,
so the proof of the proposition is complete.
\end{proof}

The following proposition is an easy consequence of the rather deep Theorem 5.2.2 in \cite{MT09} (which is a key step in the proof of  $\mathrm{B_1}\Rightarrow \mathrm{P_1}$
(in our notation)) and of the (not so deep) previous proposition. 

\begin{prop}\label{small}(\cite[Theorem 5.2.2]{MT09}) 
  Let $P$ be irreducible. Then there exists a  {\em small set} $C$, i.e.~a set $C \in \Ecal$ such that $\mu(C)>0$
  for which there exist a finite measure $\nu$ and some $m \in \N$ such that $\nu(C)>0$ and $P_m(x,B)\ge \nu(B)$ for all $x \in C$ and $B \in \Ecal$.
\end{prop}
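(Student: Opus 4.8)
The plan is to combine the black-box existence result for small sets from \cite[Theorem 5.2.2]{MT09} with the identification of $\mu$ as a maximal irreducibility measure supplied by Proposition \ref{irredu}, and then to perform one elementary push-forward step in order to arrange $\nu(C)>0$.

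First I would record that $P$ is $\mu$-irreducible. Since $P$ is irreducible, Proposition \ref{irredu} applies and, together with Remark \ref{rem:irr} (every $\sigma$-finite irreducibility measure is absolutely continuous with respect to $\mu$), shows that $\mu$ is a maximal irreducibility measure. In particular, the maximal irreducibility measure $\psi$ used in \cite{MT09} satisfies $\psi \sim \mu$, and for every $B \in \Ecal$ with $\mu(B)>0$ we have $L(x,B)>0$ for all $x \in E$. Next I would invoke \cite[Theorem 5.2.2]{MT09}, which for a $\psi$-irreducible kernel yields a small set $C_0 \in \Ecal$ with $\psi(C_0)>0$; that is, there are $m_0 \in \N$ and a nontrivial finite measure $\nu_0$ with $P_{m_0}(x,B)\ge \nu_0(B)$ for all $x \in C_0$ and $B \in \Ecal$. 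Because $\psi \sim \mu$, the set $C_0$ already satisfies $\mu(C_0)>0$, so it remains only to guarantee that the minorizing measure charges $C_0$.

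The one genuine step is to upgrade $\nu_0$ so that the minorizing measure charges the set. Since $\mu(C_0)>0$, $\mu$-irreducibility gives $L(z,C_0)>0$ for all $z$, whence $\{z : P_j(z,C_0)>0\}$ has positive $\nu_0$-measure for some $j \in \N$ (as these sets exhaust $E$ over $j \in \N$ and $\nu_0$ is nontrivial); consequently $\int_E P_j(z,C_0)\,\nu_0(\dd z)>0$. I would then set $m:=m_0+j$, $C:=C_0$ and $\nu(\cdot):=\int_E P_j(z,\cdot)\,\nu_0(\dd z)$. The Chapman--Kolmogorov identity gives, for $x \in C$ and $B \in \Ecal$,
$$
P_m(x,B)=\int_E P_j(z,B)\,P_{m_0}(x,\dd z)\ge \int_E P_j(z,B)\,\nu_0(\dd z)=\nu(B),
$$
using $P_{m_0}(x,\dd z)\ge \nu_0(\dd z)$ as measures for $x \in C$ in the inequality. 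Moreover $\nu$ is finite (indeed $\nu(E)=\nu_0(E)\le 1$), $\nu(C)=\int_E P_j(z,C_0)\,\nu_0(\dd z)>0$, and $\mu(C)>0$, which is exactly the assertion.

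The only mildly delicate point is this last step: \cite[Theorem 5.2.2]{MT09} by itself need not produce a minorizing measure that charges the small set, and pushing $\nu_0$ forward by $j$ steps (with $j$ chosen via $\mu$-irreducibility) is precisely what repairs this. Everything else is a direct citation of \cite[Theorem 5.2.2]{MT09} and Proposition \ref{irredu}, which is why the result is an easy consequence of those two inputs.
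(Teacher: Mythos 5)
Your proposal is correct and takes essentially the same approach as the paper: use Proposition \ref{irredu} (with Remark \ref{rem:irr}) to identify $\mu$ as a maximal irreducibility measure and then invoke \cite[Theorem 5.2.2]{MT09}. The only difference is your final Chapman--Kolmogorov push-forward step, which is valid but redundant, since the conclusion of \cite[Theorem 5.2.2]{MT09} already includes that the minorizing measure charges the small set it produces (i.e.\ $\nu_m(C)>0$), so, as the paper's proof observes, its conclusion coincides with that of Proposition \ref{small} once one takes $\psi=\mu$.
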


\begin{proof}
  Theorem 5.2.2 in \cite{MT09} assumes that $P$ is $\psi$-irreducible where $\psi$ is a {\em maximal irreducibility measure}. By the previous proposition we can take
  $\psi=\mu$  and therefore the  conclusions of  \cite[Theorem 5.2.2]{MT09} and  of Proposition \ref{small} are the same.
\end{proof}

\subsection{A gluing lemma}
A proof of the following {\em gluing lemma} can be found in \cite[Lemma 4.]{BPR15} (or in \cite[Lemma 4.3.2]{K18} under the additional condition that the spaces are standard Borel).
The conditions in  \cite[Lemma 4.]{BPR15} are even slightly weaker than ours. 
\begin{lemma}\label{glu}
  Let $(E_i,\EE_i)$, $i=1,2,3$ be Borel spaces and let $\rho_1$ and $\rho_3$ be probability measures on $(E_1\times E_2,\EE_1 \otimes \EE_2)$ and $(E_2\times E_3,\EE_2 \otimes \EE_3)$ respectively such that $\rho_1(E_1 \times B)=\rho_3(B \times E_3)$ for all $B \in \EE_2$. Then there exists a probability measure $\mu$ on
  $(E_1\times E_2\times E_3,\EE_1 \otimes \EE_2\otimes \EE_3)$ such that $\mu(A\times E_3)=\rho_1(A)$ for all $A \in  \EE_1 \otimes \EE_2$ and $\mu(E_1 \times B)=\rho_3(B)$
  for all $B \in \EE_2 \otimes \EE_3$.
\end{lemma}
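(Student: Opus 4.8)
The plan is to construct $\mu$ by disintegrating both $\rho_1$ and $\rho_3$ over their common $E_2$-marginal and then re-gluing the pieces with a conditionally independent product. Write $\lambda$ for the probability measure on $(E_2,\EE_2)$ given by $\lambda(B):=\rho_1(E_1\times B)=\rho_3(B\times E_3)$; the compatibility hypothesis guarantees that this is well defined and that $\lambda$ is simultaneously the $E_2$-marginal of $\rho_1$ and of $\rho_3$.

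First I would invoke the disintegration theorem (regular conditional probabilities). Since $(E_1,\EE_1)$ is Borel, there is a Markov kernel $K_1$ from $(E_2,\EE_2)$ to $(E_1,\EE_1)$ with
$$\rho_1(A)=\int_{E_2}\int_{E_1}\one_A(x_1,x_2)\,K_1(x_2,\dd x_1)\,\lambda(\dd x_2),\qquad A\in\EE_1\otimes\EE_2.$$
Symmetrically, since $(E_3,\EE_3)$ is Borel, there is a Markov kernel $K_3$ from $(E_2,\EE_2)$ to $(E_3,\EE_3)$ with
$$\rho_3(B)=\int_{E_2}\int_{E_3}\one_B(x_2,x_3)\,K_3(x_2,\dd x_3)\,\lambda(\dd x_2),\qquad B\in\EE_2\otimes\EE_3.$$
It is precisely the existence of these two kernels — equivalently, of regular conditional distributions of the $E_1$- and $E_3$-coordinates given the $E_2$-coordinate — that forces the Borel assumption, and this is the step I expect to be the crux: on a general (non-Borel) space of the kind considered in this paper such conditional distributions need not exist, which is exactly the obstruction behind the counterexample of \cite{BPR15} recorded in Remark \ref{allgemein}.

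Next I would define $\mu$ on $(E_1\times E_2\times E_3,\EE_1\otimes\EE_2\otimes\EE_3)$ by declaring the $E_1$- and $E_3$-coordinates to be conditionally independent given $x_2$, namely
$$\mu(C):=\int_{E_2}\int_{E_1}\int_{E_3}\one_C(x_1,x_2,x_3)\,K_3(x_2,\dd x_3)\,K_1(x_2,\dd x_1)\,\lambda(\dd x_2),\qquad C\in\EE_1\otimes\EE_2\otimes\EE_3.$$
Measurability of the inner integrals (so that the iterated integral makes sense) is the standard kernel/monotone-class fact, and $\mu$ is a probability measure because $K_1$, $K_3$ and $\lambda$ are.

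Finally I would verify the two marginal identities, both by direct computation with no interchange of integrals needed. For $A\in\EE_1\otimes\EE_2$, taking $C:=A\times E_3$ the integrand $\one_{A\times E_3}(x_1,x_2,x_3)=\one_A(x_1,x_2)$ is independent of $x_3$, so the innermost integral contributes the factor $\int_{E_3}K_3(x_2,\dd x_3)=1$, and what remains is exactly the disintegration formula for $\rho_1$; hence $\mu(A\times E_3)=\rho_1(A)$. For $B\in\EE_2\otimes\EE_3$, taking $C:=E_1\times B$ the integrand depends only on $(x_2,x_3)$, so after the inner $x_3$-integration the resulting function of $x_2$ is unaffected by the $K_1$-integration, which again contributes $\int_{E_1}K_1(x_2,\dd x_1)=1$, leaving the disintegration formula for $\rho_3$; hence $\mu(E_1\times B)=\rho_3(B)$. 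This completes the construction, the only genuinely non-routine ingredient being the disintegration step for which the Borel hypothesis is indispensable.
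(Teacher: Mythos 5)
Your proof is correct, but it cannot be compared to ``the paper's own proof'' in the usual sense, because the paper does not prove Lemma \ref{glu} at all: it simply cites \cite[Lemma 4]{BPR15} (and \cite[Lemma 4.3.2]{K18} for standard Borel spaces). Your argument --- disintegrate $\rho_1$ and $\rho_3$ over the common $E_2$-marginal $\lambda$ into kernels $K_1,K_3$, whose existence as regular conditional distributions is exactly where the Borel hypothesis enters, and then re-glue with conditional independence given $x_2$ --- is the standard proof, and is essentially the argument behind the version in \cite{K18}; your verification of the two marginal identities is routine and carried out correctly (the only point worth recording explicitly is that the disintegration identity, obtained first on rectangles $A_1\times A_2$, extends to all of $\EE_1\otimes\EE_2$ by a monotone class argument, the same device you already invoke for measurability of the iterated integrals). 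One refinement is worth noting: your construction never uses that $E_2$ is Borel, and in fact a single disintegration suffices, since
$$
\mu(D):=\int_{E_1\times E_2}\int_{E_3}\one_D(x_1,x_2,x_3)\,K_3(x_2,\dd x_3)\,\rho_1(\dd x_1,\dd x_2)
$$
has the required marginals assuming only that $E_3$ is Borel (for the second identity, the inner integral is a function of $x_2$ alone, so integrating it against $\rho_1$ is the same as integrating it against $\lambda$). This observation is consistent with the paper's remark that the hypotheses of \cite[Lemma 4]{BPR15} are ``even slightly weaker'' than requiring all three spaces to be Borel, and it makes precise which coordinate spaces actually need the structure.
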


\subsection{Measurability issues}

\begin{lemma}\label{restrict}
  Let $\tilde E \in \Ecal$ satisfy $\mu(\tilde E)=1$. Then there exists a set $\hat E \subset \tilde E$ in $\Ecal$ such that $P(x,\hat E)=1$ for all $x \in \hat E$ and
  $\mu(\hat E)=1$. Further, for any $\tilde E \in \Ecal$, $\tilde E$ equipped with the trace $\sigma$-field of $\Ecal$ satisfies our basic assumptions (countably generated
  $\sigma$-field  and measurable diagonal).
\end{lemma}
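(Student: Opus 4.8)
The statement has two independent parts, and I would treat the measurability claim second since it is routine while the construction of the invariant subset carries the substance. The plan is as follows.

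\textbf{The invariant full-measure subset.} I would build $\hat E$ by iterating a "one-step invariance" trimming and passing to a decreasing limit. Set $A_0:=\tilde E$. Given $A_n\in\Ecal$ with $\mu(A_n)=1$, define
\[
A_{n+1}:=A_n\cap\big\{x\in E:\,P(x,A_n)=1\big\}.
\]
Because $x\mapsto P(x,A_n)$ is measurable (it is a transition kernel), $A_{n+1}\in\Ecal$. The key input is invariance of $\mu$: since $\int_E P(x,A_n)\,\dd\mu(x)=\mu(A_n)=1$ and the integrand is bounded by $1$, we get $P(\cdot,A_n)=1$ for $\mu$-almost all $x$, so the trimming removes only a $\mu$-null set and $\mu(A_{n+1})=1$. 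By induction $\mu(A_n)=1$ for all $n$, and $A_n$ is decreasing. Put $\hat E:=\bigcap_{n}A_n\subseteq\tilde E$; then $\mu(\hat E)=1$ as a countable intersection of full-measure sets.

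\textbf{Strong invariance from the limit.} The point of the construction is to upgrade the almost-everywhere invariance into the required pointwise statement "$P(x,\hat E)=1$ for all $x\in\hat E$." If $x\in\hat E$ then $x\in A_{n+1}$ for every $n$, which by the definition of $A_{n+1}$ means $P(x,A_n)=1$ for every $n$. Since $A_n\downarrow\hat E$, continuity from above of the measure $P(x,\cdot)$ gives $P(x,\hat E)=\lim_{n\to\infty}P(x,A_n)=1$. I expect this passage from the $\mu$-a.e.~invariance (which is immediate) to the everywhere invariance on $\hat E$ to be the only real obstacle, and the decreasing-limit device is precisely what handles it; no appeal to the Borel hypothesis or to countable generation is needed here.

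\textbf{Measurability of the trace structure.} For the second assertion, fix any $\tilde E\in\Ecal$. If $\{G_k\}_{k\in\N}$ generates $\Ecal$, then $\{G_k\cap\tilde E\}_{k\in\N}$ generates the trace $\sigma$-field $\Ecal\cap\tilde E$, so it is countably generated. For the diagonal, write $\Delta_{\tilde E}:=\{(x,x):x\in\tilde E\}=\Delta\cap(\tilde E\times\tilde E)$; since $\Delta\in\Ecal\otimes\Ecal$ by our standing assumption and $\tilde E\times\tilde E\in\Ecal\otimes\Ecal$, this set lies in the trace of $\Ecal\otimes\Ecal$ on $\tilde E\times\tilde E$. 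Using that the trace of a product $\sigma$-field on a rectangle coincides with the product of the traces, $(\Ecal\otimes\Ecal)\cap(\tilde E\times\tilde E)=(\Ecal\cap\tilde E)\otimes(\Ecal\cap\tilde E)$, we conclude $\Delta_{\tilde E}\in(\Ecal\cap\tilde E)\otimes(\Ecal\cap\tilde E)$, which is exactly the measurable-diagonal requirement for the trace space.
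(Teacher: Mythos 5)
Your proof is correct and takes essentially the same approach as the paper: the paper defines $E_0:=\tilde E$, $E_{i+1}:=\{x \in E_i:\,P(x,E_i)=1\}$ and sets $\hat E:=\bigcap_i E_i$, which is exactly your iterative trimming followed by the decreasing-limit argument. Your write-up simply makes explicit the steps the paper leaves implicit (invariance of $\mu$ giving $\mu(E_{i+1})=1$, continuity from above giving $P(x,\hat E)=1$, and the trace $\sigma$-field verification the paper calls ``clear'').
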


\begin{proof}
    The last statement is clear. To see the first, define $E_0:=\tilde E$ and $E_{i+1}:=\{x \in E_i:\,P(x,E_i)=1\}$, $i \in \N_0$. Then  $\hat E:=\bigcap_i E_i$ does the job.
\end{proof}

  In the following two  statements we assume that $(E,\Ecal)$ satisfies our general assumptions spelled out in the introduction and that
  $Q$ and $\tilde Q$ are Markov kernels on $E$. 
 
  \begin{lemma}\cite[p. 30f.]{K18}\label{mbRN} 
    Let $\Lambda(x,y;\dd z):=\frac 12 \big( Q(x,\dd z)+\tilde Q(y,\dd z)\big)$. There exist  measurable maps $f$ and $\tilde f$ such that for each
    $A\in \Ecal$, we have
   $$
Q(x,A)=\int_Af(x,y;z)\,\Lambda(x,y,\dd z),\quad \tilde Q(y,A)=\int_A\tilde f(x,y;z)\,\Lambda(x,y,\dd z).
    $$
    \end{lemma}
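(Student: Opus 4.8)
The plan is to prove Lemma \ref{mbRN}, which asserts the existence of measurable Radon--Nikodym densities $f,\tilde f$ for the kernels $Q,\tilde Q$ against the averaged kernel $\Lambda$, with the densities depending \emph{jointly measurably} on the parameters $(x,y)$ as well as on the integration variable $z$. The key observation is that both $Q(x,\cdot)$ and $\tilde Q(y,\cdot)$ are absolutely continuous with respect to $\Lambda(x,y;\cdot)$ for \emph{every} fixed $(x,y)$, since $\Lambda(x,y;\cdot)=\tfrac12\big(Q(x,\cdot)+\tilde Q(y,\cdot)\big)$ dominates each summand. Thus for each fixed $(x,y)$ the densities $f(x,y;\cdot)$ and $\tilde f(x,y;\cdot)$ exist by the classical Radon--Nikodym theorem; the entire content of the lemma is the \emph{joint measurability} in all three arguments, and this is where the work lies.

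First I would set up a version of the Radon--Nikodym density as an explicit, constructively measurable object rather than invoking the abstract existence theorem pointwise (which gives no control over the dependence on $(x,y)$). The standard device is to exploit that $\Ecal$ is countably generated (one of our standing assumptions): choose a countable algebra $\mathcal{G}_0=\{G_1,G_2,\dots\}$ generating $\Ecal$, and for each $n$ let $\mathcal{G}_n$ be the finite algebra (equivalently, the finite measurable partition $\pi_n$ of $E$) generated by $G_1,\dots,G_n$, so that $\pi_n$ refines to $\Ecal$. On each atom $B$ of $\pi_n$ define the approximating density
\[
f_n(x,y;z):=\sum_{B\in\pi_n}\frac{Q(x,B)}{\Lambda(x,y;B)}\,\1_B(z),
\]
with the convention that a term is set to $0$ when $\Lambda(x,y;B)=0$. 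Each $f_n$ is manifestly measurable in $(x,y;z)$ because $(x,y)\mapsto Q(x,B)$ and $(x,y)\mapsto\Lambda(x,y;B)$ are measurable (kernels evaluated on fixed sets) and $z\mapsto\1_B(z)$ is measurable, and a finite sum of products of measurable functions is measurable.

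Next I would identify $f_n(x,y;\cdot)$ as the conditional expectation $\E_{\Lambda(x,y;\cdot)}\big[\tfrac{\dd Q(x,\cdot)}{\dd\Lambda(x,y;\cdot)}\,\big|\,\mathcal{G}_n\big]$, so that for each fixed $(x,y)$ the sequence $\big(f_n(x,y;\cdot)\big)_n$ is a nonnegative martingale under $\Lambda(x,y;\cdot)$ with respect to the filtration $(\mathcal{G}_n)_n$. By the martingale convergence theorem it converges $\Lambda(x,y;\cdot)$-almost everywhere (and in $L^1$) to the genuine density $\tfrac{\dd Q(x,\cdot)}{\dd\Lambda(x,y;\cdot)}$. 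I then set
\[
f(x,y;z):=\limsup_{n\to\infty}f_n(x,y;z),
\]
which is a pointwise $\limsup$ of jointly measurable functions and hence jointly measurable, and which equals the desired density for $\Lambda(x,y;\cdot)$-a.e.\ $z$, for every $(x,y)$. Consequently $Q(x,A)=\int_A f(x,y;z)\,\Lambda(x,y;\dd z)$ holds for all $A\in\Ecal$ and all $(x,y)$. The construction of $\tilde f$ is symmetric, replacing $Q(x,\cdot)$ by $\tilde Q(y,\cdot)$ in the numerators.

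The main obstacle is precisely the passage from pointwise existence to joint measurability: one cannot simply quote the Radon--Nikodym theorem, since the densities it produces are defined only up to null sets that could depend on $(x,y)$ in an arbitrary (non-measurable) way. The martingale construction circumvents this by exhibiting an explicit measurable representative as a $\limsup$ of finitely-generated conditional expectations, and this is exactly the point where the hypothesis that $\Ecal$ is countably generated is indispensable. A minor technical care is needed with the atoms on which $\Lambda(x,y;B)=0$ (equivalently $Q(x,B)=\tilde Q(y,B)=0$), but since $Q(x,\cdot)\ll\Lambda(x,y;\cdot)$ these contribute nothing to the integral, so the convention of assigning value $0$ there is harmless and preserves measurability.
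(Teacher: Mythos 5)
Your proof is correct, and since the paper itself offers no proof of Lemma \ref{mbRN} (it is quoted from \cite{K18}), your argument is precisely the standard construction behind that citation: approximate the density by ratios $Q(x,B)/\Lambda(x,y;B)$ over the atoms of finite partitions coming from a countable generating algebra (this is where the standing assumption that $\Ecal$ is countably generated enters), identify these as a L\'evy martingale under $\Lambda(x,y;\cdot)$, and take a $\limsup$ to obtain a jointly measurable version. The only cosmetic remark is that uniform integrability is automatic here even without the Doob-martingale identification, since $Q(x,B)\le 2\Lambda(x,y;B)$ gives $0\le f_n\le 2$ everywhere, so the approximants are uniformly bounded.
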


    This lemma is used in \cite{K18} to prove a result which, in particular, implies the following proposition (which is not immediate since
  the supremum of an uncountable family of real-valued measurable functions need not be measurable). 
  \begin{prop}\cite[Theorem 2.2.4 (i)]{K18}\label{mbTV}
The function
    $$
      (x,y) \mapsto d\big(Q(x,.),\tilde Q(y,.)\big)
    $$
is measurable.
\end{prop}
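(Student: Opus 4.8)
The plan is to sidestep the supremum over the uncountable family $\{A \in \Ecal\}$ in the definition of $d$ — precisely the source of the measurability difficulty noted just before the statement — and instead reduce everything to a single integral of the jointly measurable densities furnished by Lemma \ref{mbRN}.

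First I would record the density representation of total variation. With $\Lambda(x,y;\dd z):=\tfrac12\big(Q(x,\dd z)+\tilde Q(y,\dd z)\big)$ as in Lemma \ref{mbRN}, both $Q(x,.)$ and $\tilde Q(y,.)$ are absolutely continuous with respect to $\Lambda(x,y;.)$, with densities $f(x,y;\cdot)$ and $\tilde f(x,y;\cdot)$. For two probability measures dominated by a common measure the total variation distance equals one half the $L^1$-distance of their densities: the supremum in the definition of $d$ is attained on the set $\{z:\,f(x,y;z)>\tilde f(x,y;z)\}$, and since both measures have total mass $1$ the positive and negative parts of $f-\tilde f$ integrate to the same value, yielding
\[
d\big(Q(x,.),\tilde Q(y,.)\big)=\frac12\int_E\big|f(x,y;z)-\tilde f(x,y;z)\big|\,\Lambda(x,y;\dd z).
\]

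Second I would show that the right-hand side is $\Ecal\otimes\Ecal$-measurable in $(x,y)$. Inserting the definition of $\Lambda$ splits the integral into
\[
\frac12\int_E\big|f-\tilde f\big|(x,y;z)\,Q(x,\dd z)+\frac12\int_E\big|f-\tilde f\big|(x,y;z)\,\tilde Q(y,\dd z),
\]
so it suffices to prove that $(x,y)\mapsto\int_E h(x,y;z)\,Q(x,\dd z)$ is measurable for every nonnegative $\Ecal\otimes\Ecal\otimes\Ecal$-measurable $h$ (and symmetrically for the $\tilde Q$ term); the integrand $h:=|f-\tilde f|$ is of this form by Lemma \ref{mbRN}. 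This is a standard functional monotone class argument: for $h(x,y;z)=\one_A(x)\one_B(y)\one_C(z)$ the integral equals $\one_A(x)\one_B(y)Q(x,C)$, which is measurable because $x\mapsto Q(x,C)$ is measurable by the very definition of a Markov kernel; the assertion then extends to all nonnegative measurable $h$ by linearity and monotone convergence.

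The approach carries essentially no deep obstacle: the entire difficulty — the non-measurability of uncountable suprema — has been absorbed into Lemma \ref{mbRN}, which converts the problem into integrating one jointly measurable function against $\Lambda$. The only points demanding care are the justification of the density formula for $d$ and the verification that the kernel integral of a jointly measurable nonnegative integrand is measurable in the parameter; both are routine once the measurable densities are in hand.
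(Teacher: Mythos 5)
Your proof is correct and follows essentially the route the paper itself indicates: the paper gives no self-contained proof of Proposition \ref{mbTV}, but cites \cite[Theorem 2.2.4 (i)]{K18} and remarks that Lemma \ref{mbRN} is precisely what is used there to obtain the result. Your reduction of $d\big(Q(x,.),\tilde Q(y,.)\big)$ to one half the $L^1$-distance of the jointly measurable densities, followed by the monotone class argument for measurability of kernel integrals (the same device the paper invokes in the proof of Lemma \ref{mbae}), is a sound self-contained version of that argument.
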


\begin{lemma}\label{mbae}
The set of all $(x,y)\in E \times E$ for which $x$ and $y$ are asymptotically equivalent is a measurable subset of $(E \times E,\Ecal \otimes \Ecal)$.
  \end{lemma}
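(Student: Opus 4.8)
The plan is to reformulate asymptotic equivalence through the Lebesgue decomposition of the pair $\big(P_n(x,.),P_n(y,.)\big)$ and then read off measurability from Lemma \ref{mbRN}, in the same spirit in which that lemma is used for Proposition \ref{mbTV}. For fixed $n \in \N$ I would apply Lemma \ref{mbRN} with $Q=\tilde Q=P_n$: writing $\Lambda_n(x,y;\dd z):=\frac12\big(P_n(x,\dd z)+P_n(y,\dd z)\big)$, this yields jointly $\Ecal\otimes\Ecal\otimes\Ecal$-measurable nonnegative maps $f_n,\tilde f_n$ with $P_n(x,A)=\int_A f_n(x,y;z)\,\Lambda_n(x,y;\dd z)$ and $P_n(y,A)=\int_A \tilde f_n(x,y;z)\,\Lambda_n(x,y;\dd z)$ for all $A\in\Ecal$; that is, $f_n$ and $\tilde f_n$ are the $\Lambda_n$-densities of the two transition probabilities. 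Setting
$$A_n(x,y):=\big\{z\in E:\,f_n(x,y;z)>0 \text{ and } \tilde f_n(x,y;z)>0\big\}\in\Ecal,$$
this is (up to null sets) the maximal set on which $P_n(x,.)$ and $P_n(y,.)$ are equivalent, and for each fixed $(x,y)$ its sections lie in $\Ecal$.

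First I would prove the reformulation: for fixed $x,y,n$ and $\varepsilon>0$, a set $A$ as in the definition of asymptotic equivalence exists if and only if $P_n(x,A_n(x,y))\ge 1-\varepsilon$ and $P_n(y,A_n(x,y))\ge 1-\varepsilon$. The direction ``$\Leftarrow$'' is immediate by taking $A:=A_n(x,y)$, on which both densities are strictly positive, so the two restricted measures are equivalent. For ``$\Rightarrow$'', given a good set $A$ one has $P_n(x,A\cap\{f_n=0\})=0$, and since $P_n(y,A\cap\{\tilde f_n=0\})=0$ the assumed equivalence on $A$ forces $P_n(x,A\cap\{\tilde f_n=0\})=0$; hence $P_n(x,A\setminus A_n(x,y))=0$ and $P_n(x,A_n(x,y))\ge P_n(x,A)\ge 1-\varepsilon$, and symmetrically for $y$.

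Consequently, with $g_n(x,y):=\min\big(P_n(x,A_n(x,y)),\,P_n(y,A_n(x,y))\big)\le 1$, the pair $(x,y)$ is asymptotically equivalent exactly when $\sup_n g_n(x,y)=1$, so the set in question equals $\bigcap_{k\in\N}\bigcup_{n\in\N}\big\{(x,y):\,g_n(x,y)>1-\tfrac1k\big\}$. It then remains to show each $g_n$ is $\Ecal\otimes\Ecal$-measurable. Since $P_n(x,.)\ll\Lambda_n$ kills $\{f_n=0\}$, one has $P_n(x,A_n(x,y))=\int\one\{\tilde f_n(x,y;z)>0\}\,P_n(x,\dd z)$ and likewise $P_n(y,A_n(x,y))=\int\one\{f_n(x,y;z)>0\}\,P_n(y,\dd z)$. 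The integrands are $\Ecal\otimes\Ecal\otimes\Ecal$-measurable by Lemma \ref{mbRN}, and the standard fact that $(x,y)\mapsto\int h(x,y,z)\,P_n(x,\dd z)$ is measurable for every bounded $\Ecal\otimes\Ecal\otimes\Ecal$-measurable $h$ (a functional monotone class argument, starting from products of indicators of measurable rectangles) makes both terms measurable; hence $g_n$ is measurable as a minimum of two measurable functions.

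The only genuinely delicate step is the joint measurability in $(x,y,z)$ of the Radon--Nikodym densities $f_n,\tilde f_n$: choosing densities separately for each fixed $(x,y)$ need not produce a jointly measurable object, and it is precisely this that Lemma \ref{mbRN} supplies (the same obstacle it resolves en route to Proposition \ref{mbTV}). Everything else is routine bookkeeping with Lebesgue decompositions, the coupling/equivalence comparisons above, and the monotone class theorem.
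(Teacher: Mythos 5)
Your proof is correct and follows essentially the same route as the paper: both apply Lemma \ref{mbRN} with $Q=\tilde Q=P_n$ to obtain jointly measurable densities, form the same maximal equivalence set $A_n(x,y)=\{z:\,f_n(x,y;z)>0,\ \tilde f_n(x,y;z)>0\}$, characterize asymptotic equivalence by $P_n(x,A_n(x,y))$ and $P_n(y,A_n(x,y))$ tending to (or having supremum) $1$, and conclude measurability via the monotone class theorem. The only difference is that you spell out the maximality argument (the ``$\Rightarrow$'' direction of the reformulation) which the paper merely asserts; this is a welcome but inessential elaboration.
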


\begin{proof}
 
  Applying Lemma \ref{mbRN} with $Q=\tilde Q=P_n$ we see that there exists a jointly measurable function $f_n$ such that
  $$
P_n(x,A)=\int_A f_n(x,y;z)\Lambda_n(x,y;\dd z),\;P_n(y,A)=\int_A f_n(y,x;z)\Lambda_n(x,y;\dd z),
$$
for all $x,y \in E$ (with $\Lambda_n$ defined as in Lemma \ref{mbRN}). Defining $A_n(x,y):=\{z \in E: \,f_n(x,y;z)f_n(y,x;z)>0\}$, we see that $A_n(x,y)\in \Ecal$ and that
$P_n(x,.)$ and $P_n(y,.)$ restricted to $A_n(x,y)$ are equivalent. Further, $A_n(x,y)$ is the largest set (up to sets of measure 0 with respect to
$\Lambda_n(x,y;.)$) with this property. Observe that the map $(x,y)\mapsto P_n(x, A_n(x,y))=\int 1_{A_n(x,y)}(z)\,P_n(x,\dd z)$ is measurable (by a well-known application of the monotone class theorem) since the integrand is jointly measurable. The claim follows since $x$ and $y$ are asymptotically equivalent iff $\lim_{n \to \infty}P_n(x,A_n(x,y))=\lim_{n \to \infty}P_n(y,A_n(x,y))=1$. 
\end{proof}

\end{document}